\newtheorem{Proposition}{Proposition}
  \newtheorem{Remark}{Remark}
  \newtheorem{Lemma}[Proposition]{Lemma}
  \newtheorem{Theorem}{Theorem}
 \newtheorem{Definition}[Proposition]{Definition}
\def\blackslug{\hbox{\hskip 1pt \vrule width 4pt height 8pt depth 1.5pt
\hskip 1pt}}
\def\qed{\quad\blackslug\lower 8.5pt\null\par}
\def\Re{\mathrm{Re}}
\def\Im{\mathrm{Im}}
\title[Hybrid basis scheme for close-to-touching discs]{Hybrid basis scheme for computing electrostatic fields exterior to close-to-touching discs}
\author{D. G. Crowdy, S. Tanveer$^{(*)}$ \& T. DeLillo$^{(**)}$}
\date{}
\begin{document}
\maketitle

\begin{center}
Department of Mathematics \\
Imperial College London \\
180 Queen's Gate \\
London, SW7 2AZ, U.K.

\vskip 0.1truein 
{\tt d.crowdy@imperial.ac.uk} \\

\vskip 0.2truein
$^*$Department of Mathematics \\
Ohio State University, \\
Columbus, OH 43210, USA

\vskip 0.1truein 
{\tt tanveer@math.ohio-state.edu} \\

\vskip 0.2truein
$^{**}$Department of Mathematics \\
Wichita State University, \\
Wichita, KS 67260, USA

\vskip 0.1truein 
{\tt delillo@math.wichita.edu} \\
\end{center}

\vskip 0.5truein

\begin{abstract}
This paper presents a simple and effective new
numerical scheme for the computation of
electrostatic fields exterior to a collection
of close-to-touching discs.
The method is presented in detail for the 
two-cylinder case. The key idea is to represent the required complex
potential using a hybrid
set of basis functions comprising the usual Fourier-Laurent
expansion about each circle centre complemented by a subsidiary
expansion in a variable associated with conformal mapping
of the physical domain to a concentric annulus domain.
We also rigorously prove that there is a representation of the solution in the hybrid basis with faster decay rate of coefficients than is obtained by using a non-hybrid basis,
thereby providing a rationalization for the success of the method.
The numerical scheme is easy to implement and adaptable
to the case of multiple close-to-touching cylinders.

\end{abstract}

\vfill\eject

\section{Introduction}

This paper is concerned with a classical
problem of great importance
in the manufacture of composite materials (see \cite{Milton} and references therein), as well as in many
other applications. It is the determination of the electrical
transport
properties in two-dimensional media containing a set of
embedded inclusions which may be close-to-touching.
It is assumed that there is a uniform background field.
Lord Rayleigh \cite{Rayleigh} was one of the first to consider
such issues in the context of a regular periodic array of circular
inclusions.
In this paper we focus on the two-dimensional case, and in ways of calculating
the field exterior to the inclusions.

This problem becomes very singular as the inclusions become
close-to-touching and this circumstance presents formidable computational
problems that have been the subject of much research.
In cases there the geometry is sufficiently simple,
asymptotic methods can be used to get quantitative
insight into this case \cite{McP1} \cite{McP2} \cite{McP3}.
The most successful numerical schemes in two-dimensions
are
integral equation methods. These, nevertheless, also encounter difficulties
when the inclusions (or discs) are close-to-touching.
Greengard \& Moura \cite{Green1} employed fast
multipole-accelerated integral equation methods whose
cost grows linearly in the number of unknowns. 
Since their method does not make any special
provision for the singular
nature of the problem as the discs get close together, these
methods quickly become expensive.
Helsing \cite{Hel1} \cite{Hel2} has developed 
fast multipole-accelerated iterative schemes for inclusions
of arbitrary shape.
More recently, focussing on the case of arrays of close-to-touching
discs, Cheng \& Greengard \cite{CG1} (see also \cite{CG2}) have presented a  method which
marries ideas from the ``method of images'' to various
integral equation techniques. Their scheme relies on including
multipole expansions, of various orders, about successive generations
of reflections (or ``images'') of the centres of the discs.
These
additional reflected terms allow even very singular cases
to be resolved to essentially arbitrary accuracy. While the number
of multipole expansions involved can become very large, the actual
number of unknowns associated with each inclusion is kept small
by making use of known reflection operators which 
allow the coefficients of the expansions about the
next-generation
images to be inferred analytically from the parent coefficients.

The aim of this paper is to present an apparently new numerical
scheme to address the challenge of close-to-touching inclusions.
A key advantage of the method is that it is conceptually
simple and very
easy
to implement. The central idea is to make use of a strategically chosen over-complete
basis set which we refer to as a ``hybrid basis''.
The two cylinder problem is studied in detail.
In \S \ref{Saleh} 
we prove, for the two cylinder problem, that there exists a representation of the solution in
terms of a hybrid basis having faster decay rate properties than that obtained 
through a more traditional non-hybrid method. This provides
a rationalization of the empirical observation of superior performance
when the so-called ``hybrid basis scheme'', to be explained in the next two sections, is used.

%

\section{Problem formulation}

Suppose we wish to solve the problem for the electrostatic
field exterior to $M$ circular discs,
in an $(x,y)$-plane,
with the field strength in the far-field
tends to the value $E$ at an angle
$\chi$ to the $x$-axis.
We will repose the problem using a complex variable formulation.
The problem is mathematically equivalent to finding
the function $w(z)$, analytic and single-valued outside a collection of circular
discs, with
\begin{equation}
w(z) \sim E_0 z + {\mathcal O}(1),~~~{\rm as}~~z \to \infty,
\end{equation}
where $E_0 = E e^{-{\rm i} \chi} \in \mathbb{C}$ and 
\begin{equation}
{\rm Re}[w(z)] = \biggl \lbrace \begin{array}{ll}
0, & {\rm on~} C_1, \\
\gamma_j, & {\rm on}~C_j,~~j=2,3,...,M.
\end{array}
\label{eq:eqBC}
\end{equation}

An identical problem arises in ideal
fluid dynamics \cite{Burnside} \cite{Exact}; it is the problem 
of finding the uniform flow past a collection of discs
where the far-field flow has speed $U$ and angle $\chi$ to the
positive real axis. This reduces to finding the function $w(z)$,
analytic and single-valued outside a collection of circular
discs, 
with
\begin{equation}
w(z) \sim U_0 z + {\mathcal O}(1),~~~{\rm as}~~z \to \infty,
\end{equation}
where $U_0 = U e^{-{\rm i} \chi} \in \mathbb{C}$ and
\begin{equation}
{\rm Im}[w(z)] = \biggl \lbrace \begin{array}{ll}
0, & {\rm on~} C_1, \\
\gamma_j, & {\rm on}~C_j,~~j=2,3,...,M.
\end{array}
\label{eq:eqBC}
\end{equation}
In either problem,
the set $\lbrace \gamma_j | j=2,...,M \rbrace$ is determined as part
of the solution.
In what follows, we consider the two physical problems just stated as
essentially the same; indeed, they are the same once the identification
$w \mapsto {\rm i} w$ is made.
We will solve the problem of uniform flow past 
the discs since convenient analytical solutions to this problem 
are known (see, for example, Crowdy \cite{Exact}) and these will
be used for bench-marking purposes.

\section{Three numerical methods}

To illustrate the ideas behind the new method, it is expedient to consider
three different numerical schemes for the computation of $w(z)$. These
will be referred to as the $z$-scheme, the $\zeta$-scheme and the hybrid
scheme. It is the third hybrid scheme which constitutes the new contribution
of this paper.

\subsection{Fourier-Laurent method (the ``$z$ scheme'')}

The first scheme, which will be referred to as
the ``$z$ scheme'', is 
a standard scheme 
commonly used in the numerical computation of fields in multiply
connected domains. 
Prosnak \cite{Prosnak}, for example,
makes liberal use of it in the computation of
fluid flows in multiply connected geometries.

We describe this method in the context of a two cylinder example.
Consider two circular discs, each of radius $s$, centred at $\pm d$ where $d > s$.
There is no loss in generality in setting $d=1$ and $|U_0|=1$ since all
length and time scales can be non-dimensionalized with respect to $d$ and
$d/|U_0|$ respectively.

A natural approach is to write
the following Fourier-Laurent expansion about the centres of the discs:
\begin{equation}
w(z) = U_0 z + a_0 + \sum_{k=1}^\infty {a_k s^k \over (z-d)^k} +
 \sum_{k=1}^\infty {b_k s^k \over (z+d)^k},
 \label{eq:eqM1}
\end{equation}
where the coefficients $a_0, \lbrace a_k, b_k | k =1,2,... \rbrace$ are
to be determined and the first term clearly enforces the far-field condition.
The representation (\ref{eq:eqM1}) can be substituted into the
boundary conditions (\ref{eq:eqBC}) and evaluated at a set of collocation
points
which are most naturally chosen to be a set of
equally spaced points around the boundary
circles.
The number of collocation points must be taken to be at least as large
as the number of unknown coefficients retained in the truncation. 
Since the boundary conditions are linear in
the unknown coefficients, the latter can be found by a straightforward
least-squares
solution of the over-determined system. 

%
%

\subsection{Conformal mapping method (the ``$\zeta$ scheme'')}

A second numerical scheme makes use of a conformal mapping.
Consider 
the conformal mapping from an annulus $\rho < |\zeta| < 1$ to the unbounded
region exterior to the two discs in the $z$-plane. It is a simple M\"obius map given by
\begin{equation}
z (\zeta) = A \left ( {\zeta-\sqrt{\rho} \over \zeta+\sqrt{\rho}} \right ),
\label{eq:eq1}
\end{equation}
where
\begin{equation}
\rho = {1- (1-(s/d)^2)^{1/2} \over 1 + (1-(s/d)^2)^{1/2}}
\label{eq:eq2}
\end{equation}
and 
\begin{equation}
A = d \left ({1-\rho \over 1+\rho} \right )= \sqrt{d^2-s^2}.
\label{eq:eq3}
\end{equation}
(There is an abuse of notation here -- and throughout -- in that $z$ is used both
as a coordinate point and as a conformal mapping function.)
The point $\zeta=-\sqrt{\rho}$ has been chosen to map to infinity in the $z$-plane.
The inverse function to (\ref{eq:eq1}) is 
\begin{equation}
\zeta(z) = \sqrt{\rho} \left ( {A+z \over A-z} \right ).
\label{eq:eqINV}
\end{equation}
In this second method, referred to as the ``$\zeta$ scheme'', 
we use the modified representation of
$w(z)$ given by
\begin{equation}
w(z) = U_0 z + C 
 + \sum_{k=1}^\infty c_k [\zeta(z)]^k
  + \sum_{k=1}^\infty {d_k \rho^k \over [\zeta(z)]^{k}},
 \label{eq:eqM2a}
\end{equation}
where the coefficients
$C, \lbrace a_k, b_k, c_k, d_k | k =1,2,... \rbrace$ are now
to be determined. 
The terms
\begin{equation}
\sum_{k=1}^\infty c_k [\zeta(z)]^k
  + \sum_{k=1}^\infty {d_k \rho^k \over [\zeta(z)]^{k}}
  \label{eq:eqM3}
\end{equation}
can be recognized as a Laurent
series capable of representing any function that is analytic and single-valued
in the annulus $\rho < |\zeta|< 1$ and hence, under the conformal mapping,
in the region exterior to
the two discs. As before, the easiest strategy to find the unknown coefficients is to solve
an over-determined system by a least-squares method.

\subsection{New method (the ``hybrid basis scheme'')}

The new numerical method will be called the ``hybrid basis scheme''.
The idea is to make use of the modified representation of
$w(z)$ given by
\begin{equation}
w(z) = U_0 z + C + \sum_{k=1}^\infty {a_k s^k \over (z-d)^k} +
 \sum_{k=1}^\infty {b_k s^k \over (z+d)^k}
 + \sum_{k=1}^\infty c_k [\zeta(z)]^k
  + \sum_{k=1}^\infty {d_k \rho^k \over [\zeta(z)]^{k}},
 \label{eq:eqM2}
\end{equation}
where the coefficients
$C, \lbrace a_k, b_k, c_k, d_k | k =1,2,... \rbrace$ are now
to be determined. 
(\ref{eq:eqM2}) 
provides a
representation of the required function $w(z)$ that
is uniformly valid everywhere in this annulus and, hence, everywhere in
the flow region.

{\em In theory}, the hybrid basis used in
(\ref{eq:eqM2}) is overcomplete;
{\em in practice}, however, since all the infinite sums must be truncated
at some level, it is possible that the hybrid representation
(\ref{eq:eqM3}) may offer numerical advantages
over the
$\zeta$-scheme or the $z$-scheme used separately.
We will now show that this is indeed true and, moreover, that
the concomitant advantages are dramatic.

We have found that the method of determination of the unknown coefficients in
the hybrid representation requires care, otherwise the potential advantage is easily
lost.
To determine the unknown coefficients
in (\ref{eq:eqM2}),
we truncate, at order $N$, each of the four infinite sums in (\ref{eq:eqM2})
and solve, by a least squares procedure, 
an over-determined linear system. This system is found 
by substituting 
(\ref{eq:eqM2}) into the boundary conditions
(\ref{eq:eqBC}) and evaluating them at ${\mathcal N}$ collocation points
on the boundary of each of {\em four} discs: ${\mathcal N}$ points are taken
to be equi-spaced on each of the two circles 
$|z \pm d| =s$ together with ${\mathcal N}$ 
points equi-spaced around each of the two circles $|\zeta|=1, \rho$. 
In order that the system is overdetermined we clearly need the number ${\mathcal N}$
of collocation points on each of the four circles to
exceed $2N$ since the boundary conditions on each circle are real and $N$ complex
coefficients associated with each circle are to be determined (as well as the single real
constant
$\gamma_2$);
typically we take ${\mathcal N} = 4N$, 
but the method is found to be insensitive to this choice.
The chosen collocation points are illustrated on the left in Figure \ref{Fig1aa};
to the right the images, under the map (\ref{eq:eq1}) and its inverse (\ref{eq:eqINV}),
of the collocation points
in the ``other'' plane are shown.
On the physical disc boundaries, the equi-spaced collocation
points on the $\zeta$ circles
naturally crowd around the area where they are most
needed -- in the region where the discs are closest together.
On the other hand, the image of uniformly spaced out points on
the $z$-circles crowd around the negative real axis 
in the $\zeta$-plane (recall that $\zeta=-\sqrt{\rho}$ is the preimage of the point at
infinity in the physical plane). Intuitively, then, our choice of hybrid basis scheme
has the boundaries of the discs ``well covered''.

The above choice
of collocation points was found to be crucial to the success of the method:
the hybrid representation
of the solution appears only to be effective provided the collocation points
are chosen as described above.

\begin{figure}
\begin{center}
\includegraphics[scale=0.6]{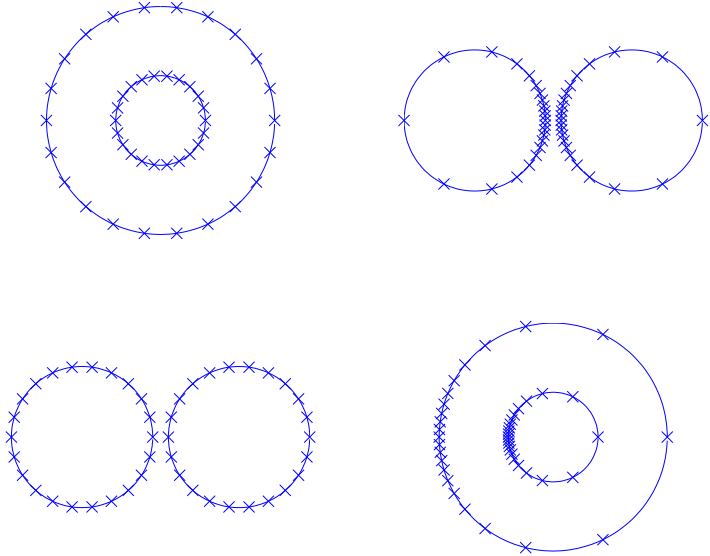}
\caption{The images (shown right) under the forward and inverse
conformal mappings of equi-spaced pre-image points on the circles
on the $\zeta$ and the $z$ planes (shown left). The latter
are used as collocation points for the hybrid basis scheme.
\label{Fig1aa}}
\end{center}
\end{figure}

\section{Exact solution}

There is an exact solution, described in \cite{Exact},
 for the two-disc problem which can be
used to test
the accuracy of the numerical scheme; it also
forms the basis of our analysis in \S \ref{Saleh}.
This solution is given as follows.
Define the two functions
\begin{equation}
P(\zeta) \equiv (1-\zeta) \prod_{k=1}^\infty (1-\rho^{2k} \zeta)(1-\rho^{2k} \zeta^{-1}),\qquad
K(\zeta) = {\zeta P_\zeta(\zeta) \over P(\zeta)}.
\label{eq:eq4}
\end{equation}
Then the exact solution for $W(\zeta) \equiv w(z(\zeta))$, satisfying the (arbitrarily
chosen) normalization
that $W(-1)=0$, is
\begin{equation}
W(\zeta) = - 2 A U_0 \left ( K(\sqrt{\rho}^{-1}) - K(-\zeta \sqrt{\rho}^{-1}) \right )
+ 2 A {\overline U_0} \left ( K(\sqrt{\rho}) - K(-\zeta \sqrt{\rho}) \right ).
\label{eq:eqEX}
\end{equation}
This
exact solution can be computed to arbitrary accuracy by truncating
the infinite product (\ref{eq:eq4}) at the appropriate level.
It can be shown, directly from its definition (\ref{eq:eq4}), that $P(\zeta,\rho)$
satisfies the functional relations
\begin{equation}
P(\rho^2 \zeta, \rho) = P(1/\zeta,\rho) = - \zeta^{-1} P(\zeta,\rho).
\end{equation}
From these it is then easily deduced that
\begin{equation}
K(\rho^2 \zeta,\rho) = K(\zeta,\rho) - 1, \qquad K(1/\zeta,\rho) = 1- K(\zeta,\rho).
\label{eq:eqKProp}
\end{equation}
The identities (\ref{eq:eqKProp}) 
can be used to directly verify that (\ref{eq:eqEX}) is the required solution.

\begin{figure}
\begin{center}
\includegraphics[scale=0.4]{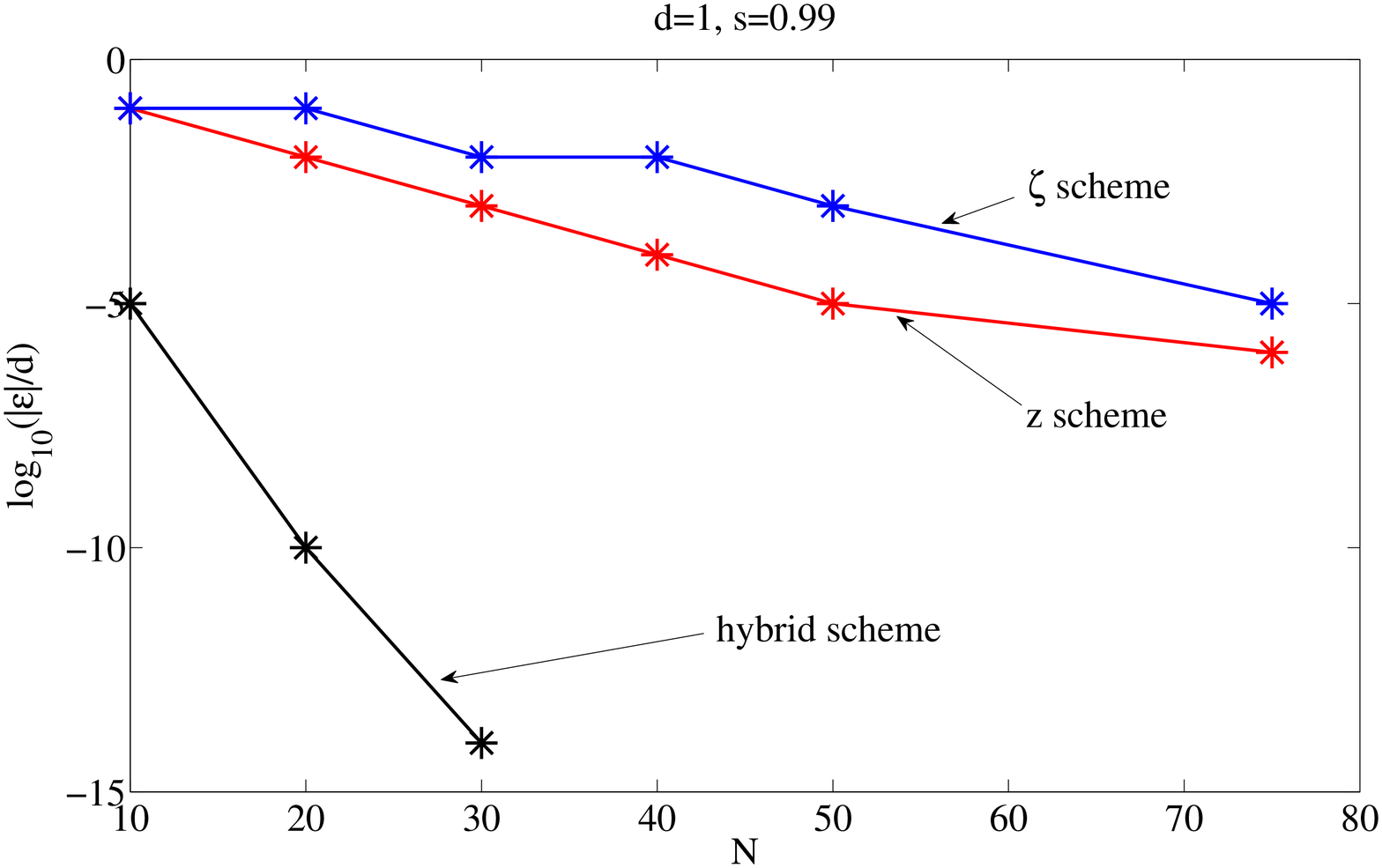}
\includegraphics[scale=0.4]{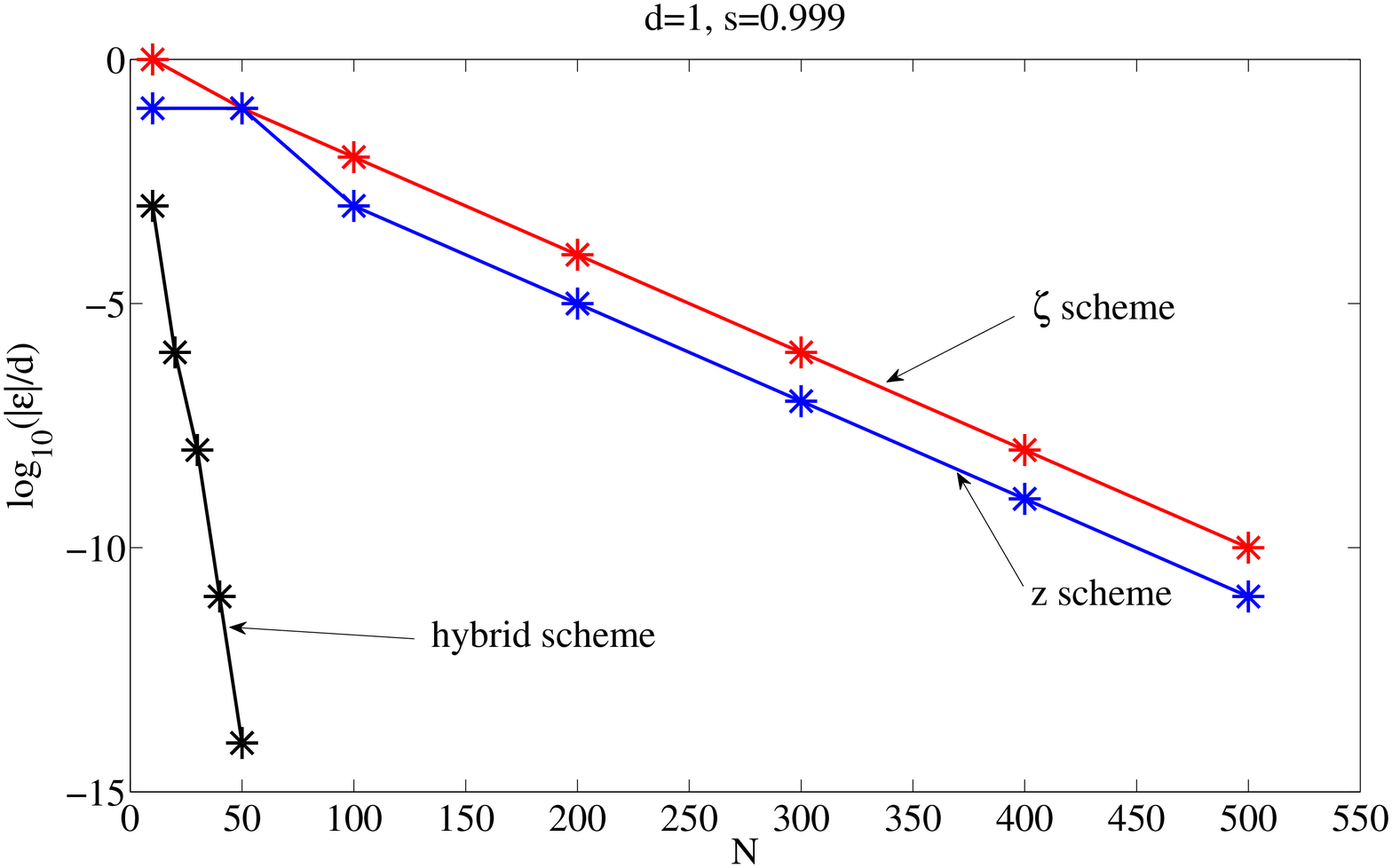}
\caption{Comparison of the performance of the three schemes in the two-disc
problem with $d=1, s=0.99$ (upper) and $d=1, s=0.999$ (lower).
The logarithm of the maximum absolute error is plotted against the number of
modes $N$ in the truncation.
\label{Fig11}}
\end{center}
\end{figure}


\section{Performance}
To compare the performance of the three methods, the upper graph in
Figure \ref{Fig11}
shows the logarithm of the maximum absolute error in the
numerical solutions compared with the exact solution plotted as a function
of the truncation level $N$ for the case $d=1, s=0.99$ and $U_0=e^{{\rm i}\pi/4}$ so that 
the separation of the two discs is $0.02$.
The errors were computed by calculating the difference between
$w(z)$ as given by the exact solution (\ref{eq:eqEX}) (evaluated on the
boundaries of the two discs) to the values given by the numerical schemes.
It is clear that, while both the $z$-scheme and $\zeta$-scheme give comparable
accuracy at the same level of truncation, the hybrid scheme offers
dramatic increases in accuracy at much smaller levels of truncation.
This feature becomes even more pronounced at smaller
separation distances: Figure \ref{Fig11} also
shows results for
$d=1, s=0.999$ and $U_0=e^{i\pi/4}$ so that 
the separation of the two discs is $0.002$, an order of magnitude smaller.

The advantages of the hybrid basis scheme
are seen even more clearly in Figure \ref{Fig10}.
At each value of the disc separation (horizontal axis)
the vertical axis shows the number of modes required to attain
accuracy of $10^{-6}$ (as compared with the exact solution).
While around $1000$ modes are needed for both the
$z$-scheme and the $\zeta$-schemes in order 
to attain the required accuracy when the disc separation is of the order
of $10^{-4}$, the hybrid scheme attains the same accuracy with only $140$ modes
even when the separation is as small as $10^{-6}$.
For such small separations, both the
$z$-scheme and the $\zeta$-schemes
become unfeasible (requiring extremely large numbers of modes,
which is why the results for these methods have been omitted
from Figure \ref{Fig10}).

\begin{figure}
\begin{center}
\includegraphics[scale=0.4]{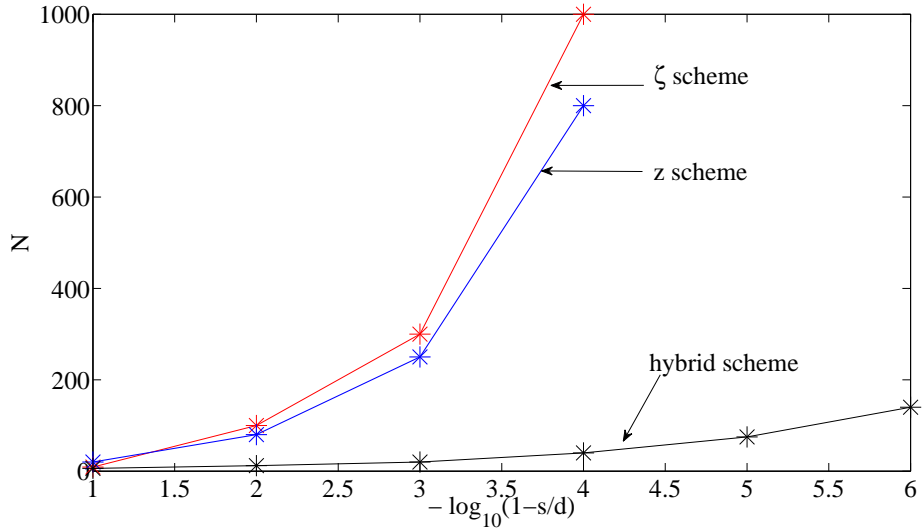}
\caption{Comparison of the performance of the three schemes in the two-disc
problem. 
At each value of the disc separation (horizontal axis), 
the vertical axis shows the number of modes required to attain
accuracy of $10^{-6}$ (compared to the exact solution).
\label{Fig10}}
\end{center}
\end{figure}

\section{Two-scale analysis \label{Saleh}}

The aim of this section is to
demonstrate why a hybrid basis scheme involving series
expansions in both the $\zeta$
and $z$ variables might be expected to
perform better than separate expansions in either variable, as the
foregoing numerical evidence has shown.
We show the hybrid scheme
has better decay properties of its 
coefficients than a power series in either the $\zeta$
or $z$ variables separately. We will prove the following theorem:
\begin{Theorem}
\label{Thm1}
There exists a representation of complex potential $w$ in the following form: 
\begin{equation}
w = U_0 z + a_0+ \sum_{j=1}^\infty \left ( a_j \zeta^j + b_j \rho^{j} \zeta^{-j} \right )  
+ \sum_{j=1}^\infty \left [ {c_j s^j \over (z-d)^j} + {d_j {s^j} \over (z+d)^j} \right ],
\end{equation}
where for any $k \ge 1$,
\begin{equation}
j^k |a_j | \ , j^k |b_j| \ , j^k |c_j| \ , j^k |d_j | \le \frac{M_k}{T^{k/2+1}} \ , 
\end{equation} 
where $T= (1/\pi) \log \rho^{-1}$ scales as {$\sqrt{1-s/d}$}
and $M_k$ is independent of $T$.
However, when either of the sets $\left \{ a_j, b_j \right \} $ or $\left \{ c_j, d_j \right \}$ is chosen to be zero,
then the best possible bounds in the above scale as $1/T^{k+1}$. 

Further, 
there exists a logarithmic decomposition in the form
\begin{equation}
w(z) = U_0 z + a_0+ \sum_{j=1}^\infty \frac{c_j s^j}{(z-d)^j} + \frac{d_j s^j}{(z+d)^j} + 
\frac{A}{\pi T} \left (U_0 - {\bar U}_0 \right ) \log \left [ \frac{(z-d) (z+A)}{(z+d)(z-A)} 
\right ]
\end{equation}   
with uniform rapid decay properties
\begin{equation}
j^k |c_j| \ , j^k |d_j | \le \frac{M_k}{T}  
\end{equation} 
for any $k \ge 1$.
\end{Theorem}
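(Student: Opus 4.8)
The plan is to work entirely in the $\zeta$-plane using the exact solution $W(\zeta)=w(z(\zeta))$ given by (\ref{eq:eqEX}), since $K(\zeta)$ is built from a theta-like infinite product $P(\zeta)$ whose analytic structure I understand explicitly. My first step is to locate the singularities of $K$. From the product (\ref{eq:eq4}), $P(\zeta)$ has simple zeros at $\zeta=\rho^{-2k}$ and $\zeta=\rho^{2k}$ for $k\ge 0$, so $K(\zeta)=\zeta P_\zeta/P$ has simple poles at precisely these points, with residues I can read off. For the hybrid bound I want to write $W(\zeta)$ as a sum of two pieces: one analytic and single-valued in an annulus strictly larger than $\rho<|\zeta|<1$ (giving geometrically decaying Laurent coefficients $a_j,b_j$), and one capturing the nearby poles, which under the inverse map $z(\zeta)$ become the Laurent tails $\sum c_j s^j/(z-d)^j$ and $\sum d_j s^j/(z+d)^j$ about the disc centres. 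The two poles of $K(\pm\zeta/\sqrt\rho)$ and $K(\pm\zeta\sqrt\rho)$ closest to the unit circle sit at $|\zeta|=\rho^{\mp1/2}$, i.e.\ at distance $O(T)$ from the annulus boundary since $\rho=e^{-\pi T}$; these are exactly the images of $z=\pm d$ (the disc centres), which explains why extracting them as $z$-centred Laurent series is natural.

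The key estimate is the decay rate. After the subtraction, the remaining analytic part lives in an annulus $\rho\,e^{\pi T}<|\zeta|<e^{\pi T}$ of the enlarged width, and Cauchy's estimate on a contour in this annulus gives $|a_j|,|b_j|\le \ct\cdot e^{-\pi T j}$ up to algebraic factors, which beats any power $j^{-k}$; the stated polynomial bound $j^k|a_j|\le M_k/T^{k/2+1}$ then follows by keeping track of how the residues and the contour radius scale in $T$. The factor $T^{-k/2-1}$ rather than $T^{-k-1}$ is the heart of the improvement and comes from splitting the singular contribution symmetrically between the $\zeta$-series and the $z$-series: each of the two basis families only has to resolve singularities at distance $O(\sqrt{\,\cdot\,})$ in its own natural variable, so the effective gap is the geometric mean. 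To prove the converse sharpness claim --- that setting $\{a_j,b_j\}=0$ or $\{c_j,d_j\}=0$ forces the weaker $1/T^{k+1}$ --- I would argue that a single basis must carry the full strength of the pole at the disc centre, whose distance to the relevant boundary circle is $O(T)$ rather than $O(\sqrt T)$, so Cauchy's estimate degrades accordingly; this is a lower bound and so needs the exact residue, not just an inequality.

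For the logarithmic decomposition, the idea is that the obstruction to representing $W$ purely by $z$-centred Laurent series is that $K(\zeta)$ is not single-valued once one tries to absorb the infinite string of image poles: summing the principal parts of all the reflected poles at $\zeta=\pm\rho^{\mp(2k+1)/2}$ produces, in the $z$-variable, a logarithm. Concretely I expect $\sum_k$ (residue)$/(z-\text{image})$ to telescope, via the Mittag--Leffler/partial-fractions identity for the logarithmic derivative of $P$, into $\log[(z-d)(z+A)/((z+d)(z-A))]$ with prefactor $\tfrac{A}{\pi T}(U_0-\bar U_0)$; the $U_0-\bar U_0$ combination appears because the two halves of (\ref{eq:eqEX}) contribute with opposite signs to the branch content. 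Once the log is pulled out, the residual function is single-valued and analytic up to the first \emph{unreflected} pole only, so its $z$-Laurent coefficients decay like $M_k/T$, as claimed.

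I expect the main obstacle to be making the two-scale split rigorous: showing that one can genuinely distribute the singular part between the two bases so as to realize the geometric-mean distance $O(\sqrt T)$, and then converting that geometric decay into the clean polynomial bound $M_k/T^{k/2+1}$ with $M_k$ \emph{independent} of $T$. This requires uniform control, as $T\to 0$, of the residues of $K$ and of the Jacobian $dz/d\zeta$ of the M\"obius map near the disc centres, together with a careful choice of the contour radius (optimizing the trade-off between the algebraic prefactor and the exponential) --- and, for the sharpness half, an honest lower bound confirming that no cleverer single-basis representation can do better than $1/T^{k+1}$.
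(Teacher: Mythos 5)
Your global picture is right---pull out a singular contribution tied to the limit points $\pm A$, split what remains between the two bases, and aim for the geometric-mean scale $\sqrt{T}$---but the two estimates that actually carry the theorem are the ones your sketch does not supply, and the routes you indicate would not deliver the stated rates. Pushing the Cauchy contour into an annulus enlarged by a factor $e^{\pi T}$ only gains a distance $O(T)$, and since $\sup_j j^k e^{-cTj}\sim (k/(ceT))^k$ this reproduces exactly the single-basis rate $T^{-k}$ that the theorem is designed to beat, not $T^{-k/2}$. To clear an $O(\sqrt T)$ neighbourhood you would have to subtract $O(T^{-1/2})$ of the image poles $\zeta=-\rho^{\pm(2m+1)/2}$ and then prove that the \emph{sum} of their principal parts, re-expanded about $z=\pm d$, has uniformly controlled Laurent coefficients; you flag this yourself as the main obstacle, and no residue-counting or nearest-singularity argument closes it. The paper never moves a contour: it partitions the boundary data of the logarithmic part $W_{2,1}=\log[(z-A)/(z-d)]$ with a smooth cut-off $\Phi(\nu)$ supported at scale $\delta$ in the annulus angle, assigns $(1-\Phi)\,\Re W_{2,1}$ to the $\zeta$-basis and $\Phi\,\Re W_{2,1}$ to the $z$-basis, and exploits the anisotropy of the change of variables (\ref{3.3}): a $\delta$-neighbourhood of $\nu=0$ covers all of the physical circle except an $O(T/\delta)$-neighbourhood of the gap point $\theta=\pi$ (see (\ref{eqthetam})), so the two pieces have boundary derivatives of order $\delta^{-k}$ in $\nu$ and $(\delta/T)^{k}$ in $\theta$ respectively, balanced at $T^{-k/2}$ by the choice $\delta=\sqrt T$. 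Each variable ``sees'' only one of the two near-singularities ($\zeta=-\rho^{-1/2}$, i.e.\ $z=d$, near $\nu=0$; $z=A$ near $\theta=\pi$), and the cut-off hands each to the basis that is blind to it.

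The second gap is your claim that, once the logarithm is removed, the residual is ``analytic up to the first unreflected pole only, so its $z$-Laurent coefficients decay like $M_k/T$.'' The image poles accumulate at $z=\pm A$, within distance $O(T)$ of the circles $|z\mp d|=s$, so a distance-to-singularity estimate again yields only $M_k/T^{k}$-type bounds; it cannot produce $j^k|c_j|\le M_k/T$ for \emph{every} $k$ with $M_k$ independent of $T$, which is what Proposition \ref{Prop1} asserts. The paper obtains this from the modular-transformed representation (\ref{2.18.0}) of $K$ in the variables $\mu=e^{-2\pi/T}$ and $\chi=e^{{\rm i}\pi\xi/T}$: on the boundaries the relevant quantities are compositions of nice functions with $e^{-\nu/T}$ where $\nu\approx c_0T\tan(\theta/2)$, so the two factors of $T$ cancel and all $\theta$-derivatives of the boundary values of $\omega$ are bounded uniformly in $T$ (Lemmas \ref{lem1}--\ref{lem2}); the single power $T^{-1}$ comes from the explicit prefactor, not from a widened domain of analyticity. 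Relatedly, the logarithm is not produced by telescoping a Mittag--Leffler sum over all reflected poles: it is the antiderivative of the one explicit term $2A^2(U_0-\bar U_0)/[\pi T(z^2-A^2)]$ that must be subtracted from $dW/dz$ to make the remainder uniformly smooth, its coefficient being fixed by the exact first Laurent coefficient (\ref{2.8.0}). Your instinct on the sharpness half is sound; the paper simply computes the log's series coefficients $\pm\rho^{j/2}/j$ explicitly in (\ref{3.6})--(\ref{3.6.1}).
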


The proof will make use of the exact solution (\ref{eq:eqEX}) 
known for the two-cylinder
case
and the a few preliminary Lemmas and Propositions. The proof is completed in \S \ref{proofs}.

\subsection{The function $K(\zeta,\rho)$} 

The exact solution (\ref{eq:eqEX}) for the complex velocity potential 
is given in (\ref{eq:eqEX}) in terms of the function $K(\zeta,\rho)$ 
which can be shown from its definition (\ref{eq:eq4}) to admit the infinite sum representation
%
\begin{equation}
\label{1.10}
K(\zeta) = -\frac{\zeta}{1-\zeta} 
+\sum_{k=1}^\infty \left \{ 
-\frac{\rho^{2k} \zeta}{1-\rho^{2k} \zeta}  
+\frac{\rho^{2k} \zeta^{-1}}{1-\rho^{2k} \zeta^{-1}} \right \}.
\end{equation}
$K$ is single valued function of
$\zeta$, implying that
\begin{equation}
\label{1.11}
K (\zeta e^{2 {\rm i} \pi} ) = K (\zeta),
\end{equation}
while it also satisfies the ``quasi-periodic'' property $K(\rho^2 \zeta,\rho) = K(\zeta,\rho)-1$
already given in (\ref{eq:eqKProp}).
From these properties it should be clear that there
is a connection between $K (e^{\pi \xi} )$ 
and the quasi-periodic Weierstrass zeta function in the $\xi$ variable where
\begin{equation}
\xi = \frac{1}{\pi} \log \zeta.
\end{equation}
Moreover, its derivative is associated with
the
Weierstrass $\wp$ function  with periods $ 2 T:= (2/\pi) \log \rho^{-1}$ and $2 {\rm i}$.
Indeed on use of 
certain representations of the Weierstrass $\wp$ function\footnote{We 
could also relate $K$ directly to the Weierstrass zeta function without the need for integration;
however, since certain constants have to be determined by evaluating them at half-periods in any case, there is
no particular advantage in doing this.},
it is possible to show (see appendix) that
\begin{equation}
\label{2.18.0}
K \left ( e^{\pi \xi} \right ) = \frac{\xi}{2T} + \left ( \frac{1}{2} - \frac{{\rm i}}{2 T} \right ) 
+ \frac{{\rm i}}{T} \left ( \frac{\chi}{\chi-1} \right )   
+\frac{{\rm i}}{T} \sum_{m=1}^\infty \left ( 
\frac{\mu^m \chi^{-1} }{1-\mu^m \chi^{-1} } 
-\frac{\mu^m \chi }{1-\mu^m \chi} 
\right ),
\end{equation}
where
\begin{equation}
\label{2.18.1}
T = \frac{1}{\pi} \log \rho^{-1}, \qquad \mu = e^{-2 \pi/T}, \qquad \chi = e^{{\rm i} \pi \xi/T}.
\end{equation}
It follows that on the boundary $|\zeta|=1$, which corresponds to $|z-d|=s$, with parametrizations:
\begin{equation}
\label{2.18.2}
\zeta = - e^{{\rm i} \nu}, \qquad z = d + s e^{-{\rm i} \theta},  \qquad
{\rm where} ~~\nu, \theta  \in [-\pi, \pi] \ ,
\end{equation}
we obtain
\begin{equation}
\begin{split}
K \left ( -\rho^{-1/2} \zeta \right ) &= 
\frac{{\rm i} \nu}{2 \pi T} 
+\left ( \frac{3}{4} - \frac{{\rm i}}{2 T} \right ) 
+ \frac{1}{T} \left ( \frac{e^{-\nu/T}}{1-{\rm i} e^{-\nu/T}} \right )   
\\
&+\frac{1}{T} \sum_{m=1}^\infty \left ( 
\frac{\mu^m e^{\nu/T} }{1+ {\rm i} \mu^m e^{\nu/T} } 
+\frac{\mu^m e^{-\nu/T} }{1- {\rm i} \mu^m e^{-\nu/T} }  
\right ), \\
K \left ( -\rho^{1/2} \zeta \right ) &= 
\frac{{\rm i} \nu}{2 \pi T} 
+\left ( \frac{1}{4} - \frac{{\rm i}}{2 T} \right ) 
- \frac{1}{T} \left ( \frac{e^{-\nu/T}}{1+{\rm i} e^{-\nu/T}} \right )   
\\
&-\frac{1}{T} \sum_{m=1}^\infty \left ( 
\frac{\mu^m e^{\nu/T} }{1- {\rm i} \mu^m e^{\nu/T} } 
+\frac{\mu^m e^{-\nu/T} }{1+ {\rm i} \mu^m e^{-\nu/T} }  
\right ).
\end{split}
\label{3.2a}
\end{equation}
The relationship between angles $\nu$ and $\theta$ in the $z$ and $\zeta$
domains are given by
\begin{equation}
\label{3.3}
\nu =  2 \arctan \left [ \left ( \frac{1-\sqrt{\rho}}{1+\sqrt{\rho}} \right ) \tan \frac{\theta}{2}   
\right ], \qquad 
\theta = 2 \arctan \left \{ \left ( \frac{1+\sqrt{\rho}}{1-\sqrt{\rho}} \right ) \tan \frac{\nu}{2} \right \}.
\end{equation}
Similarly, on $|\zeta|=\rho$, corresponding to $|z+d|=s$, with parameterization
\begin{equation}
\label{2.18.2a}
\zeta = - \rho e^{{\rm i} \nu}, \qquad z = -d - s e^{{\rm i} \theta},
\qquad {\rm where} ~\nu, \theta  \in [-\pi, \pi],
\end{equation}
we obtain the same relationship (\ref{3.3}) between angles $\nu$ and $\theta$ as before, 
while
\begin{equation}
\begin{split}
K \left ( -\rho^{-1/2} \zeta \right ) &= 
\frac{{\rm i} \nu}{2 \pi T} 
+\left ( \frac{1}{4} - \frac{{\rm i}}{2 T} \right ) 
- \frac{1}{T} \left ( \frac{e^{-\nu/T}}{1+{\rm i} e^{-\nu/T}} \right )   
\\
&-\frac{1}{T} \sum_{m=1}^\infty \left ( 
\frac{\mu^m e^{\nu/T} }{1- {\rm i} \mu^m e^{\nu/T} } 
+\frac{\mu^m e^{-\nu/T} }{1+ {\rm i} \mu^m e^{-\nu/T} }  
\right ),
\\
K \left ( -\rho^{1/2} \zeta \right ) &= 
\frac{{\rm i} \nu}{2 \pi T} 
+\left ( -\frac{1}{4} - \frac{{\rm i}}{2 T} \right ) 
+ \frac{1}{T} \left ( \frac{e^{-\nu/T}}{1-{\rm i} e^{-\nu/T}} \right )   
\\
&+\frac{1}{T} \sum_{m=1}^\infty \left ( 
\frac{\mu^m e^{\nu/T} }{1+ {\rm i} \mu^m e^{\nu/T} } 
+\frac{\mu^m e^{-\nu/T} }{1- {\rm i} \mu^m e^{-\nu/T} }  
\right ).
\end{split}
\label{3.4a}
\end{equation}
We note that in particular on each of the circles $|\zeta|=1$ and $|\zeta|=\rho$, we have
${\rm i} \zeta \partial_\zeta K \left ( -\rho^{-1/2} \zeta \right ) = 
\partial_\nu K \left ( -\rho^{-1/2} \zeta \right ) $ involves exponential terms in $\nu/T$ that tends
to the same constant -- namely, ${{\rm i}}/[{2 \pi T}]$ --
exponentially outside an
$O(T)$ neighborhood of $\nu=0$. Because {of the} $\nu=O(T)$ scale,  it is clear also that this a series
representation in the form $W-U_0 z=\sum_{k} \left ( a_j \zeta^j + b_j \rho^j \zeta^{-j} \right ) $ will have
poor decay properties; indeed since bounds on each of
$\partial_{\nu}^{k} \left (W (-e^{i \nu} )-U_0 Z (-e^{i \nu} ) \right )$
$\partial_{\nu}^{k} \left (W (-\rho e^{i \nu} )-U_0 Z (-\rho e^{i \nu} ) \right )$
scales as $T^{-k-1}$, from well-known properties of Fourier coefficients,
it follows we will obtain poor estimates 
$j^k |a_j| \ , j^k |b_j| \le M_k/T^{k+1}$. We now seek to alleviate this through use of
a hybrid representation.

\begin{figure}
\begin{center}
\hskip -0.4truein\includegraphics[scale=1.05]{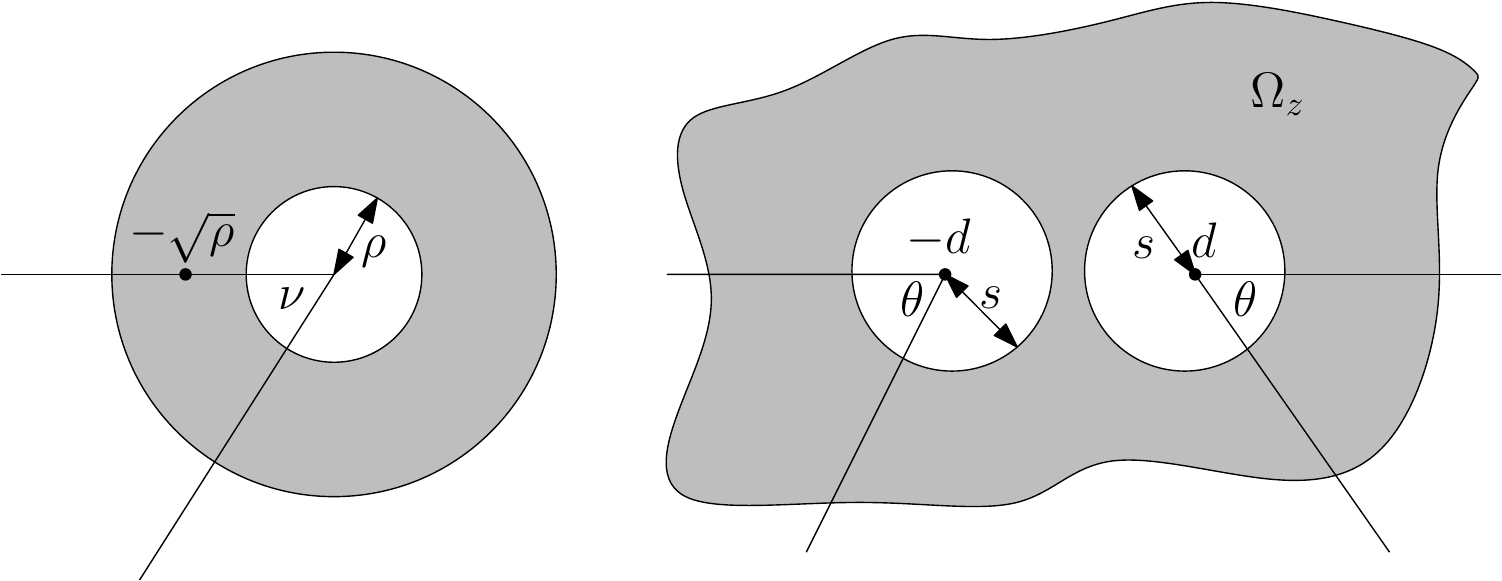}
\caption{Definition sketch for the angles $\nu$ and $\theta$ in (\ref{2.18.2})
and (\ref{2.18.2a}). The annulus $\rho < \zeta < 1$ and the region exterior
to the two discs centred at $z=\pm d$ are shown shaded.
\label{Fig11a}}
\end{center}
\end{figure}
 
\subsection{The function $\omega (z)$ and rapidly decaying series}

\begin{Definition}
Define 
\begin{equation}
\label{2.1}
\omega (z) = \frac{1}{\zeta z_\zeta} \left \{\zeta W_\zeta (\zeta (z)) 
-U_0 \zeta z_\zeta - \frac{A}{\pi T} (U_0 - {\bar U}_0 ) \right \} 
\end{equation}
\end{Definition}
\begin{Remark}
{\rm 
On use of (\ref{eq:eqEX}) and \eqref{2.1} we find
\begin{equation}
\label{eqRemark1}
\omega (z) = 
\frac{dW}{dz} - U_0 - \frac{A (U_0 - {\bar U}_0 )}{\pi T \zeta z_\zeta} =
\frac{dW}{dz} - U_0 + \frac{2 A^2 (U_0 - {\bar U}_0 )}{\pi T (z^2-A^2)}.
\end{equation}
It is important to note that the two points $z=\pm{A}$ lie outside the domain 
$\Omega_z:= 
\left \{ z: |z+d| \ge s, |z-d| \ge s \right \}$.
It follows that
$\omega $ is analytic in $\Omega_z$ with $\omega \rightarrow 0$ as $z \rightarrow \infty$. Therefore,
\begin{equation}
\label{2.5}
\omega (z) = \sum_{j=1}^\infty \frac{c_j s^j}{(z-d)^j}
+ \sum_{j=1}^\infty \frac{d_j s^j}{(z+d)^j} \ ,
\end{equation}
where 
\begin{equation}
\label{2.6}
c_j = 
- \frac{1}{2 \pi {\rm i} s^j} \oint_{|z'-d|=s} \omega (z') (z'-d)^{j-1} dz'
= \frac{1}{2 \pi} \int_{-\pi}^\pi \omega (d+s e^{-{\rm i}  \theta'} ) e^{-{\rm i}  j \theta'} d\theta'
\end{equation}
\begin{equation}
\label{2.7}
d_j = 
-\frac{1}{2 \pi {\rm i}  s^j} \oint_{|z'+d|=s} \omega (z') (z'+d)^{j-1} dz'
= \frac{(-1)^{j}}{2 \pi} \int_{-\pi}^\pi \omega (-d-s e^{{\rm i}  \theta'} ) e^{{\rm i}  j \theta'} d\theta'
\end{equation}
and where contour integration in the $z'$ plane is understood in a clockwise sense
}
\end{Remark}

On $z=d+s e^{-{\rm i}  \theta}$, corresponding to $\zeta = -e^{{\rm i}  \nu}$, 
(\ref{eqRemark1}) implies
\begin{multline}
\label{2.3}
\omega (z) = -U_0 + 
\frac{2 {\rm i} A U_0 \nu_\theta}{(z-d)} \left [ \partial_\nu K ( - \rho^{-1/2} \zeta) 
- \frac{{\rm i}}{2\pi T} \right ] \\  
-\frac{2 A {\rm i} {\bar U}_0 \nu_\theta}{(z-d)} 
\left [ \partial_\nu K ( - \rho^{1/2} \zeta) - \frac{{\rm i} }{2 \pi T}
\right ],
\end{multline}
and on $z=-d-se^{{\rm i} \theta}$, corresponding to $\zeta= -\rho e^{{\rm i} \nu}$, 
(\ref{eqRemark1}) implies
\begin{multline}
\label{2.4}
\omega (z) = -U_0 - 
\frac{2 {\rm i} A U_0 \nu_\theta}{(z+d)} 
\left [ \partial_\nu K ( - \rho^{-1/2} \zeta) 
- \frac{{\rm i}}{2\pi T} \right ] \\  
+\frac{2 {\rm i} A {\bar U}_0 \nu_\theta}{(z+d)} 
\left [ \partial_\nu K ( - \rho^{1/2} \zeta) - \frac{{\rm i} }{2 \pi T}
\right ].
\end{multline}
In particular integration in \eqref{2.6}-\eqref{2.7} for $j=1$ is explicit resulting in
\begin{equation}
\label{2.8.0}
s c_1 = \frac{A}{\pi T} (U_0-{\bar U}_0 )  = -s d_1.  
\end{equation}
Further, it is clear from (\ref{2.1}) that
\begin{equation}
\label{2.8}
W(z) = C \int^z \omega (z') dz' + 
\frac{A}{\pi T} (U_0-{\bar U}_0 ) \log \left ( \frac{z+A}{z-A} \right )  
\end{equation}
implying from (\ref{2.5}) and (\ref{2.8.0}) 
that for some constant $C$
\begin{equation}
\label{2.9}
W = C - \sum_{k=2}^\infty \frac{ s^k c_k}{(k-1) (z-d)^{k-1}}
- \sum_{k=2}^\infty \frac{ s^k d_k}{(k-1) (z+d)^{k-1}}
+ W_2,
\end{equation}
where
\begin{equation}
\label{2.10}
W_2 (z) = 
\frac{A}{\pi T} (U_0-{\bar U}_0 ) \log \left ( \frac{z-d}{z-A} \right )  
+\frac{A}{\pi T} (U_0-{\bar U}_0 ) \log \left ( \frac{z+A}{z+d} \right ).
\end{equation}

We will now prove that the Laurent series coefficients of $\omega(z)$, 
namely $\lbrace c_k \rbrace$, $\lbrace d_k \rbrace$, decay rapidly in 
$k$ uniformly for $T \in (0, T_0]$ in the following sense:

\begin{Proposition}
\label{Prop1}
For any integer $k \ge 0$,
\begin{equation}
\label{eqomega2}
\Big | j^k c_j \Big | \ ,  \Big | j^k d_j \Big | \le \frac{M_k}{T} 
\end{equation}   
where $M_k$ is independent of $T \in (0, T_0]$.
\end{Proposition}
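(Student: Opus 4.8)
The coefficients $c_j$ and $d_j$ are, by (\ref{2.6})--(\ref{2.7}), nothing but the Fourier coefficients in $\theta$ of $\omega$ restricted to the two boundary circles. The plan is therefore to integrate by parts $k$ times in each of these Fourier integrals: since $\omega$ is smooth and $2\pi$-periodic in $\theta$, every boundary term vanishes and one gets $j^k c_j = (2\pi\,{\rm i}^k)^{-1}\int_{-\pi}^\pi \partial_\theta^k\omega(d+se^{-{\rm i}\theta})\,e^{-{\rm i}j\theta}\,d\theta$, and likewise for $d_j$. Hence $|j^k c_j|,\,|j^k d_j|\le \tfrac{1}{2\pi}\int_{-\pi}^\pi |\partial_\theta^k\omega|\,d\theta$, and the whole proposition reduces to the single uniform estimate $\sup_\theta|\partial_\theta^k\omega|\le M_k$ on each circle, with $M_k$ independent of $T\in(0,T_0]$; the stated bound $M_k/T$ then follows \emph{a fortiori} since $1/T\ge 1/T_0$ on this range. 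The case $j=1$ is already explicit from (\ref{2.8.0}), and the estimate for $d_j$ is identical to that for $c_j$ because (\ref{2.4}) has exactly the same structure as (\ref{2.3}); so it suffices to bound $\partial_\theta^k\omega$ on $|z-d|=s$.

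To expose that structure I would substitute the explicit boundary forms (\ref{3.2a}) into (\ref{2.3}) and pass to the stretched variable $u=\nu/T$. Writing $\phi_1(u)=\tfrac{e^{-u}}{1-{\rm i}e^{-u}}$ and $\phi_2(u)=\tfrac{e^{-u}}{1+{\rm i}e^{-u}}$, the two bracketed quantities in (\ref{2.3}) equal $T^{-2}\phi_1'(u)$ and $-T^{-2}\phi_2'(u)$, up to terms that are $O(e^{-\pi/T})$ (the $m\ge 1$ lattice sums, in which every exponent is $\le -\pi/T$). Using $\nu_\theta=T\,u_\theta$ and $z-d=se^{-{\rm i}\theta}$, the singular $T$-powers in the prefactor collapse and one obtains, modulo exponentially small corrections,
\[
\omega+U_0=\frac{2{\rm i}A}{T}\,\frac{u_\theta}{z-d}\bigl[\,U_0\,\phi_1'(u)-{\bar U}_0\,\phi_2'(u)\,\bigr].
\]
The point of this rearrangement is the two-scale structure it reveals: $\phi_1',\phi_2'$ and all of their $u$-derivatives decay like $e^{-|u|}$, while the amplitude $2{\rm i}A/T$ is bounded on $(0,T_0]$ because $A=\sqrt{d^2-s^2}$ and $T$ both scale like $\sqrt{1-s/d}$.

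The heart of the argument is then a uniform bound on $\partial_\theta^k(\omega+U_0)$. By the chain rule $\partial_\theta=u_\theta\,\partial_u$ together with the Leibniz and Fa\`a di Bruno formulae, $\partial_\theta^k(\omega+U_0)$ is a finite, $k$-dependent but $T$-independent, linear combination of products of bounded smooth functions of $\theta$, of $\theta$-derivatives of $u$ of order $\le k$, and of $\phi_{1,2}^{(m)}(u)$. The one thing that must be controlled is the growth of the $\theta$-derivatives of $u$. Here the key algebraic fact, obtained directly from (\ref{3.3}), is
\[
u_\theta=\frac{\alpha}{T}\Bigl[\cos^2\tfrac{Tu}{2}+\alpha^{-2}\sin^2\tfrac{Tu}{2}\Bigr],\qquad \alpha=\frac{1-\sqrt\rho}{1+\sqrt\rho},
\]
from which, using $\alpha/T=O(1)$ and $0\le u\le\pi/T$ on the circle, one deduces $|u_\theta|\le C(1+u^2)$ and, inductively, $|\partial_\theta^m u|\le C_m(1+|u|)^{2m}$ with $T$-independent constants. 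Feeding these into the Leibniz expansion, every term is bounded by a fixed polynomial in $|u|$ times $e^{-|u|}$, hence by an absolute constant uniformly in $\theta$ and in $T\in(0,T_0]$. This is exactly where the delicate cancellation lives, and it is the main obstacle: near the gap $\theta=\pi$ the stretching factor $u_\theta$ blows up like $T^{-2}$, but there $u$ is of order $T^{-1}$ and the profiles are exponentially small, so the dangerous region contributes nothing; establishing the polynomial-in-$|u|$ growth of $\partial_\theta^m u$ uniformly in $T$, and hence the boundedness of each polynomial-times-$e^{-|u|}$ term, is all that is really at stake. With $\sup_\theta|\partial_\theta^k\omega|\le M_k$ in hand, the Fourier estimate of the first paragraph yields $|j^k c_j|,\,|j^k d_j|\le M_k\le (M_kT_0)/T$, which is the asserted bound.
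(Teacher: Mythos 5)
Your proposal is correct and, at bottom, is the same proof as the paper's: reduce \eqref{eqomega2} to uniform-in-$T$ bounds on $\sup_\theta|\partial_\theta^k\omega|$ over the two circles (the paper does this in Remark \ref{RemProp1} via the quantities $T\partial_\theta^{k+1}(K-\tfrac{{\rm i}\nu}{2\pi T})$; you do it by integrating \eqref{2.6}--\eqref{2.7} by parts $k$ times), and then win the competition between the exponential decay of the boundary-layer profiles and the polynomial growth of the $\theta$-derivatives of the stretched variable $u=\nu/T$. Where you genuinely diverge is in how that competition is controlled. The paper's Lemma \ref{lem1} bounds $\partial_\theta^l e^{-\nu/T}$ by a three-regime case analysis in $x=\tan(\theta/2)$ (small, moderate and large $y$), each regime invoking the composite-derivative formula \eqref{eqComp}, and Lemma \ref{lem2} then transfers this to the rational profiles $F_0,F_1,F_2$. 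You instead derive the closed-form identity $u_\theta=\tfrac{\alpha}{T}\bigl[\cos^2\tfrac{Tu}{2}+\alpha^{-2}\sin^2\tfrac{Tu}{2}\bigr]$ (correct: it follows from $\tan(\nu/2)=\alpha\tan(\theta/2)$ in \eqref{3.3}) and run a single induction $|\partial_\theta^m u|\le C_m(1+|u|)^{2m}$ valid on the whole circle; this is cleaner and buys a sharper conclusion, namely $\sup_\theta|\partial_\theta^k\omega|\le M_k$ with no $1/T$ at all (using $A/T=O(1)$), so the stated bound $M_k/T$ follows with room to spare, consistently with \eqref{2.8.0} which gives $c_1=O(1)$. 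Two small repairs are needed. First, the bracket in your reduced formula should read $U_0\phi_1'(u)+\bar U_0\phi_2'(u)$: the explicit minus sign in front of the $\bar U_0$ term in \eqref{2.3} cancels against the minus sign in the identification of that bracket with $-T^{-2}\phi_2'(u)$ from \eqref{3.2a}. Second, you should justify that the discarded $m\ge1$ lattice sums remain negligible after $k$ differentiations: each $\partial_\theta$ can cost a factor of order $(1+|u|)^2=O(T^{-2})$ near the gap, but the prefactor $e^{-(2m-1)\pi/T}$ beats any power of $T^{-1}$ and the sum over $m$ converges geometrically -- exactly the estimate recorded at the end of the paper's Lemma \ref{lem2}, followed by the dominated-convergence step used to commute the infinite sum with $\partial_\theta^k$.
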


\begin{Remark}
\label{RemProp1}
From (\ref{2.6}), (\ref{2.7}), (\ref{2.3}) and (\ref{2.4}), the 
proof of Proposition \ref{Prop1} will follow after we show that
on each of the boundaries $\zeta=-e^{{\rm i} \nu (\theta)}$ and 
$\zeta=-\rho e^{{\rm i} \nu (\theta)} $, 
for any integer $k \ge 0$
\begin{equation}
\label{eqRemProp1}
T \partial_\theta^{k+1} \left ( K (-\rho^{-1/2} \zeta) - \frac{{\rm i} \nu}{2 \pi T} 
\right ), \qquad   
T \partial_\theta^{k+1} \left ( K (-\rho^{1/2} \zeta) - \frac{{\rm i} \nu}{2 \pi T} 
\right )      
\end{equation}
are each $2 \pi$-periodic function of $\theta$ and have bounds $M_k$ independent of $T$.
Periodicity is clear since, from (\ref{3.2a}) and (\ref{3.4a}), both
\begin{equation}
{\partial \over \partial \theta} \left [K (-\rho^{-1/2} \zeta) - \frac{{\rm i} \nu}{2 \pi T} \right ], \qquad {\rm and} \qquad
{\partial \over \partial \theta} \left [
K (-\rho^{1/2} \zeta) - \frac{{\rm i} \nu}{2 \pi T} \right ]
\end{equation}
are obviously periodic in $\nu$ for $\zeta=-e^{{\rm i}\nu}, -\rho e^{{\rm i}\nu}$, and therefore in $\theta$; derivatives of $\nu$ are
also periodic in $\theta$ as is clear from 
(\ref{3.3}).
Hence, we only need to prove the bounds. 
\end{Remark}

\begin{Remark}
We will find the following identity on
derivativies of smooth composite functions useful:
\begin{equation}
\label{eqComp}
\partial_\theta^l F (\eta (\theta) ) =  
\sum_{j_m \ge 0 \atop \sum_{m=1}^l m j_m = l } 
C_{j_1,j_2, \cdots, j_l} \left ( \partial_\eta^{(\sum_{m=1}^l j_m)} F \right ) 
\left ( \prod_{m=1}^l \left [\partial_\theta^m \eta \right ]^{j_m} \right ),
\end{equation}
\end{Remark}
\begin{Lemma}
\label{lem1}
For $T \in (0, T_0]$, define 
\begin{equation}
\label{eqchi}
\chi = \exp \left [ -\frac{2}{T} \arctan \left ( \frac{1-\sqrt{\rho}}{1+\sqrt{\rho}} \tan \frac{\theta}{2} 
\right )
\right ]. 
\end{equation}
Then, for $\theta \in [0, \pi )$,
\begin{equation}
\label{eqchi2}
\Big | \partial_{\theta}^l \chi \Big | \le M_l 
\end{equation}
where constant $M_l$ is independent of $T$ and is only dependent on $l$.
\end{Lemma}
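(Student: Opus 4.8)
The plan is to reduce the whole estimate to a single pointwise inequality that is controlled by the exponential smallness of $\chi$. First I would record the identification $\frac{1-\sqrt{\rho}}{1+\sqrt{\rho}} = \tanh(\pi T/4) =: \beta$, which follows from $\rho = e^{-\pi T}$; consequently $c_0 T \le \beta \le \pi T/4$ for $T \in (0,T_0]$, so $\beta$ and $T$ are comparable. With this notation $\chi = e^{-\nu/T}$, where $\nu(\theta) = 2\arctan(\beta\tan(\theta/2))$ is exactly the angle map (\ref{3.3}); since $\nu$ increases from $0$ to $\pi$ as $\theta$ runs over $[0,\pi)$, we have $0 < \chi \le 1$, which settles the case $l=0$.

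Next I would obtain a clean closed form for the derivatives of $\nu$. A direct simplification gives $\nu'(\theta) = 2\beta/Q(\theta)$ with $Q(\theta) := (1+\beta^2) + (1-\beta^2)\cos\theta \in [2\beta^2,2]$, and an easy induction then shows $\nu^{(m)} = 2\beta\, N_m(\theta)/Q(\theta)^m$ for $m\ge 1$, where $N_m$ is a trigonometric polynomial whose coefficients are polynomials in $\beta^2$; since $\beta$ ranges in a compact subset of $[0,1)$, $N_m$ and its derivatives are bounded by a constant $C_m$ depending only on $m$. Hence $|\nu^{(m)}(\theta)| \le C_m\,\beta/Q(\theta)^m$ and $|\partial_\theta^m g| \le C_m\,\beta/(T\,Q(\theta)^m)$ with $g := \nu/T$. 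Feeding these into the composite-derivative identity (\ref{eqComp}) applied to $\chi = e^{-g}$, each monomial carries a factor $(\beta/T)^{k}$ with $k=\sum_m j_m\ge 1$ (so $(\beta/T)^k\le(\pi/4)^k\le 1$) together with a denominator $Q^{\sum_m m j_m}=Q^{l}$, so the finite sum collapses to
\begin{equation}
\left| \partial_\theta^l \chi \right| \;\le\; \widehat{C}_l\,\frac{e^{-\nu/T}}{Q(\theta)^l}, \qquad l \ge 1 .
\end{equation}

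The lemma therefore reduces to the uniform bound $e^{-\nu/T}/Q(\theta)^l \le M_l$ for $\theta\in[0,\pi)$, $T\in(0,T_0]$, and this is the step I expect to be the crux. The difficulty is genuinely two-scale: $Q$ attains its minimum $Q(\pi)=2\beta^2 \sim T^2$ at $\theta=\pi$, so $Q^{-l}$ blows up like $T^{-2l}$ there, and one must show that the decay of $e^{-\nu/T}$ beats this uniformly in both $\theta$ and $T$. To make the cancellation explicit I would set $\psi=\pi-\theta$, $\tau=\tan(\psi/2)$ and $u=\tau/\beta$, for which one has the exact identities
\begin{equation}
Q(\theta) = 2\beta^2\,\frac{1+u^2}{1+\beta^2 u^2}, \qquad \pi-\nu = 2\arctan u, \qquad e^{-\nu/T} = e^{-(2/T)\arctan(1/u)},
\end{equation}
the last using $\arctan u + \arctan(1/u)=\pi/2$. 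Substituting, the target becomes boundedness of $(1+\beta^2u^2)^l e^{-(2/T)\arctan(1/u)}/[(2\beta^2)^l(1+u^2)^l]$ uniformly in $u\ge 0$ and $T$.

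Finally I would close this by a short case split. For $u\le 1$ (i.e.\ near $\theta=\pi$) one has $\arctan(1/u)\ge\pi/4$, so the exponential is $\le e^{-\pi/(2T)}$ while the prefactor is $\le C/\beta^{2l}\le C'/T^{2l}$, and $e^{-\pi/(2T)}/T^{2l}$ is bounded on $(0,T_0]$. For $u\ge 1$, writing $w=1/u\in(0,1]$, the subrange $w\le\beta$ gives a prefactor $\le 1$, while on $\beta\le w\le 1$ concavity of $\arctan$ (namely $\arctan w \ge \tfrac{\pi}{4}w$ on $[0,1]$) together with $\beta\ge c_0 T$ reduces the estimate to $c_0^{-2l}\,(w/T)^{2l}e^{-\pi w/(2T)} = c_0^{-2l}\,y^{2l}e^{-\pi y/2}$ with $y=w/T$, bounded by $\sup_{y\ge 0} y^{2l}e^{-\pi y/2}$. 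This yields $M_l$ depending only on $l$, completing the proof. The only substantive work is this last uniform estimate; the reduction to it via (\ref{eqComp}) and the closed form for $\nu^{(m)}$ is routine bookkeeping.
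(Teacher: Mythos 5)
Your proof is correct, and every identity I checked holds: $\frac{1-\sqrt{\rho}}{1+\sqrt{\rho}}=\tanh(\pi T/4)$, $\nu'=2\beta/Q$ with $Q=(1+\beta^2)+(1-\beta^2)\cos\theta$, the induction $\nu^{(m)}=2\beta N_m/Q^m$, the identity $Q=2\beta^2(1+u^2)/(1+\beta^2 u^2)$, and the final three-way split in $u$ all work, and the $l=0$ case is correctly handled outside the Fa\`a di Bruno reduction (which needs $k=\sum j_m\ge 1$). The route is genuinely different from the paper's in its execution, though the underlying mechanism (the composite-derivative identity \eqref{eqComp} plus ``exponential decay of $\chi$ beats polynomial growth of the derivatives of the exponent'') is the same. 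The paper never computes the closed form for $\nu'$; instead it introduces $x=\tan(\theta/2)$, $y=c_0Tx$, $\nu_1=(1/T)\arctan y$ and splits into three regimes ($xT^{2/3}\le 1$, $T^{-2/3}\le x\le 4/(c_0T)$, $y\ge 4$), using a Taylor series of $\arctan y$ in the first, direct estimates in the second, and a Laurent series in $1/y$ in the third, in each regime bounding $\partial_\theta^l\nu_1$ by a different power of $1/T$ and then invoking the smallness of $e^{-2\nu_1}$ to absorb it. Your version compresses all of this into the single uniform pointwise inequality $|\partial_\theta^l\chi|\le \widehat{C}_l\,e^{-\nu/T}/Q^l$ valid on all of $[0,\pi)$, which makes the two-scale cancellation explicit in one place; it also sidesteps some of the more informal steps in the paper's third regime (e.g.\ the assertion that $\theta\mapsto Ty^{-1}$ is smooth for $y\ge 4$). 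The paper's regime-by-regime expansion, on the other hand, is the template it reuses for the neighbouring Lemmas, so it integrates more directly with the rest of \S\ref{Saleh}. Either argument yields the stated $T$-independent constants $M_l$.
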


\begin{proof}
In the following proof, $C_l$ is a generic constant depending on $l$ that is allowed
to vary from step to step.
We introduce for convenience intermediate variables $x= \tan(\theta/2)$, 
$y = c_0 T x$ and $\nu_1 = (1/T) \arctan y$; it follows
from (\ref{eqchi}) that
\begin{equation}
\chi = e^{-2 \nu_1}.
\end{equation}
First, consider the case when $x T^{2/3} \le 1$, implying $ y$ is small. 
Taylor expansion of $\arctan y$ gives
\begin{equation}
\label{eqnu1x}
\nu_1 = c_0 x + \sum_{j=3}^\infty (-1)^j j T^{j-1} c_0^j \frac{x^j}{j}  
\end{equation}
We note that for any $\theta \in [0, \pi)$, $|\partial_\theta^j x | \le C (1+x)^{j+1}$.
Identifying $\eta$ with $x$ and $F$ with $c_0^j T^{j-1} x^{j}$ in \eqref{eqComp}
to determine bounds on the sum in (\ref{eqnu1x}), it follows 
that for $x T^{2/3} \le 1$,
\begin{equation}
\label{eqthetalnu1}
\Big | \partial_\theta^l \nu_1 \Big |
\le C_l (1+x)^{l+1} \left ( 1 +\sum_{j=3}^\infty T^{j-1} c_0^{j} x^{j} \right )
\le C_l (1+x)^{l+1} \le C_l (1+\nu_1)^{l+1} 
\end{equation}
Now identifying $\eta$ with $\nu_1$ and taking $F(\nu_1) = e^{-2\nu_1}$ in 
\eqref{eqComp} and using  
\eqref{eqthetalnu1},
it follows that
\begin{equation}
\Big | \partial_\theta^l e^{-2 \nu_1} \Big |
\le C_l e^{-2 \nu_1} (1 + \nu_1)^{2l} \le C_l 
\end{equation}
Now consider, $ T^{-2/3} \le x \le \frac{4}{c_0 T}$, {\it i.e.} $ c_0 T^{1/3} \le y \le 4$. 
We note that 
\begin{equation}
\label{eq:eq82.0}
\left | {\partial^l y \over \partial \theta^l} \right | 
= c_0 T \left | {\partial^l x \over \partial \theta^l} \right | 
\le C_l T (1+x)^{l+1} 
\end{equation}
Identifying $\eta$ with $y$, and taking $F(y) = \arctan (y)$
in \eqref{eqComp},
it follows from
\eqref{eq:eq82.0} that for $y \le 4$, 
\begin{equation}
\label{eqn1thetaj}
\Big | \partial_\theta^l \nu_1 \Big | \le 
\frac{C_l}{T} (1+x)^l  
\end{equation}
Now, identifying $\nu_1$ with $\eta$ and taking $F(\nu_1) =
e^{-2 \nu_1}$ in \eqref{eqComp}, and noting $y \ge c_0 T^{1/3}$ implies
$\nu_1 \ge \frac{1}{T} \arctan [c_0 T^{1/3} ]$, it follows from
\eqref{eqn1thetaj}: 
\begin{equation}
\Big | \partial_\theta^l e^{-2 \nu_1} \Big | \le \frac{C_l}{T^l} (1+x)^l 
e^{-2 \nu_1} 
\le C_l
\end{equation}
since $y \le 4$ implies $x^l \le \frac{4^l}{(c_0 T)^l}$.
For $y \ge 4$, note the the convergent series representation in powers of $\frac{1}{y}$
\begin{equation}
\nu_1 = \frac{1}{T} \left ( \frac{\pi}{2} -\sum_{j=0}^\infty \frac{(-1)^j}{(2j+1) y^{2j+1}}  
\right )
=: \frac{1}{T} f \left (\frac{1}{y} \right ) 
\end{equation}
We also note that
\begin{equation}
\label{eqinvythet}
\frac{1}{y} = \frac{1}{c_0 T} \cot \frac{\theta}{2} 
\end{equation} 
and the mapping $\theta \rightarrow T y^{-1}$ is smooth for $y \ge 4$.
Identifying $\frac{1}{y}$ with
$\eta$ and $F$ with $\frac{1}{T} f$ 
in \eqref{eqComp}, it follows from  
\eqref{eqinvythet} that 
\begin{equation}
\Big | \partial_\theta^l \nu_1 \Big | \le \frac{C_l}{T^{l+1}}  
\end{equation}
On the otherhand if $ y \ge 4$, then $e^{-2 \nu_1} \le e^{-\frac{2}{T} \arctan 4} $ and therefore,
using \eqref{eqComp} again
\begin{equation}
\Big | \partial_\theta^l e^{-2 \nu_1} \Big | \le \frac{C_l e^{-\frac{2}{T} \arctan 4}}{T^{2l}}
\le C_l
\end{equation}
Hence the lemma follows for all cases.
\end{proof}
\begin{Lemma}
\label{lem2}
For $T \in (0, T_0]$, 
define 
\begin{equation}
F_0(\nu) = \frac{e^{-\nu/T}}{1 \pm{{\rm i}} e^{-\nu/T} }, \qquad
F_1 (\nu) = \frac{\mu e^{\pm{\nu}/T}}{1+{\rm i} \mu^m e^{\pm{\nu}/T}}, \qquad
F_2 (\nu) = \frac{\mu e^{\pm{\nu}/T}}{1-{\rm i} \mu^m e^{\pm{\nu}/T}}.
\nonumber 
\end{equation}
For $j=0,1,2$, for any integer $l \ge 1$, there exists constant $M_l$
independent of $T$ so that for  
$\nu \in \left (-\pi, \pi \right ) $, 
\begin{equation}
\Big | \partial_\theta^l F_j (\nu) \Big | \le M_l.
\end{equation}
\end{Lemma}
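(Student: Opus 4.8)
The plan is to recognize each $F_j$ as a composition $G\circ u$ in which $G$ is a fixed M\"obius-type function whose derivatives are bounded uniformly on the whole ray $[0,\infty)$, and $u$ is a real exponential whose $\theta$-derivatives are controlled by Lemma \ref{lem1}; the bound then follows by feeding these two facts into the chain rule \eqref{eqComp}. The linchpin is the observation, from \eqref{eqchi} together with \eqref{3.3}, that $\tfrac{\nu}{2}=\arctan\!\big(\tfrac{1-\sqrt{\rho}}{1+\sqrt{\rho}}\tan\tfrac{\theta}{2}\big)$, so that the function $\chi$ of Lemma \ref{lem1} is precisely $\chi=e^{-\nu/T}$. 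Thus Lemma \ref{lem1} furnishes exactly the $\theta$-derivative bounds on the inner exponentials that appear in $F_0,F_1,F_2$.

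First I would dispose of the outer function. Writing $u$ for the relevant nonnegative exponential argument, each $F_j$ has the shape $G(u)=u/(1\pm{\rm i}u)=\mp{\rm i}\pm{\rm i}(1\pm{\rm i}u)^{-1}$, so for $k\ge1$ we get $G^{(k)}(u)=\pm{\rm i}(-1)^{k}k!\,(\pm{\rm i})^{k}(1\pm{\rm i}u)^{-(k+1)}$ and hence $|G^{(k)}(u)|=k!\,(1+u^{2})^{-(k+1)/2}\le k!$ for all real $u\ge0$. The point is that the denominator $1\pm{\rm i}u$ never approaches zero and in fact grows, so $G$ and all of its derivatives stay bounded even as $u\to\infty$. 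For $F_0$ on the range $\nu<0$, where $e^{-\nu/T}$ is large, I would instead clear the exponential and write $F_0=(e^{\nu/T}\pm{\rm i})^{-1}=:H(v)$ with $v=e^{\nu/T}$; this $H$ obeys the same uniform bounds $|H^{(k)}(v)|=k!\,(1+v^{2})^{-(k+1)/2}\le k!$.

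The substance is the bound $|\partial_\theta^{l}u|\le M_l$ on the inner arguments. For $F_0$ with $\theta\in[0,\pi)$ this is immediate, since $u=e^{-\nu/T}=\chi$ is the function of Lemma \ref{lem1}; for $\theta\in(-\pi,0]$ I would use the oddness of $\nu(\theta)$ in \eqref{3.3}, so that $v=e^{\nu/T}=\chi(-\theta)$ and the bound follows from Lemma \ref{lem1} evaluated at $-\theta\in[0,\pi)$. For the tail functions $F_1,F_2$ the argument is $u=\mu^{m}e^{\pm\nu/T}=e^{(\pm\nu-2\pi m)/T}$, and the decisive feature is that, because $\nu\in(-\pi,\pi)$ and $m\ge1$, one has the uniform exponential bound $u\le e^{-\pi/T}$ (indeed $u\le \mu^{m-1}e^{-\pi/T}$, which later supplies the geometric decay in $m$ needed to sum the series in \eqref{3.2a}--\eqref{3.4a}). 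Differentiating $u=e^{\psi}$ with $\psi=(\pm\nu-2\pi m)/T$ via \eqref{eqComp} gives $\partial_\theta^{l}u=u\,B_l$, where $B_l$ is a polynomial in the quantities $\partial_\theta^{p}\psi=\pm2\,\partial_\theta^{p}\nu_1$; the case analysis in the proof of Lemma \ref{lem1} in fact establishes the intermediate estimate $|\partial_\theta^{p}\nu_1|\le C_p\,T^{-(p+1)}$, so that $|B_l|\le C_l\,T^{-2l}$. The exponential smallness $u\le e^{-\pi/T}$ then swamps this algebraic growth, since $e^{-\pi/T}T^{-2l}$ is bounded on $(0,T_0]$, yielding $|\partial_\theta^{l}u|\le M_l$.

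Finally I would assemble the estimate: applying \eqref{eqComp} to $F_j=G(u)$ (respectively $H(v)$) writes $\partial_\theta^{l}F_j$ as a finite sum of products of a bounded derivative $\partial_u^{(\cdot)}G$ and powers of the bounded quantities $\partial_\theta^{m}u$, so that $|\partial_\theta^{l}F_j|\le M_l$ with $M_l$ independent of $T$ (and of $m$). I expect the only genuine obstacle to be the $F_1,F_2$ case on the half-range where $e^{+\nu/T}$ is large: there the inner argument's derivatives genuinely blow up polynomially in $1/T$, and the estimate survives only because the prefactor $\mu^{m}$ is exponentially small in $1/T$. This is what forces reliance on the sharper intermediate bound $|\partial_\theta^{p}\nu_1|\le C_p T^{-(p+1)}$ produced \emph{inside} the proof of Lemma \ref{lem1}, rather than on its stated conclusion alone; everything else is routine bookkeeping with the chain rule \eqref{eqComp}.
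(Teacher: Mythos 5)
Your proposal is correct and follows essentially the same route as the paper's proof: both treat each $F_j$ as a rational function, with uniformly bounded derivatives on $[0,\infty)$, of the exponential $\chi=e^{-\nu/T}$ controlled by Lemma \ref{lem1} (using the $\nu\to-\nu$ reflection for the other half-range of $F_0$), and both handle $F_1,F_2$ by letting the exponentially small prefactor $\mu^m$ absorb the algebraic growth in $1/T$ of the derivatives of $\nu_1$. Your write-up simply makes the intermediate estimates more explicit than the paper does.
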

\begin{proof}
Consider first $F_0$. Since use of the transformation 
$\nu \rightarrow -\nu$ in the 
alternate form 
\begin{equation}
F_0 = \mp{{\rm i}} - \frac{e^{\nu/T}}{1\mp{{\rm i}} e^{\nu/T}}
\end{equation}
makes the arguments
for $\nu \in \left ( -\pi, 0 \right ]$ equivalent to those for $\nu \in \left [0, \pi \right )$
then
we restrict to the latter case.
Since $F_0$ is a rational function of $\chi=e^{-\nu/T}$
with singularity location in the complex plane at $O(1)$ distance, 
and with all derivatives with respect
to $\chi$ bounded for $\chi \in [0, \infty )$,
it suffices to show that $\Big | \partial_\theta^l \chi \Big | \le C_l$, with $C_l$ independent of
$T$. But this has been proved in Lemma \ref{lem1}.
The argument for each of $F_1$ and $F_2$ is similar, except that 
we need to use
$$ \Big | \mu e^{\nu_1} \partial^{k}_\theta \nu_1 \Big | 
\le C_0 e^{-2\pi/T} \nu_1^{k+1} e^{\nu_1}  \le M_k, 
$$
independent of $T$.
\end{proof}
\noindent{\bf Proof of Proposition \ref{Prop1}:}
From Remark 
\ref{RemProp1}, it is enough to obtain bounds independent of $T$ for terms appearing in  
\ref{eqRemProp1} on each circle $\zeta= -e^{{\rm i} \nu (\theta)}$ and $\zeta= -\rho e^{{\rm i} \nu (\theta)}$.
However, from (\ref{3.2a}) and \eqref{3.4a}, it is clear from applying Lemma
\ref{lem2} on each term that this is true and the proof of Proposition \ref{Prop1} follows after
use of dominated
convergence theorem to justify the commutation of infinite sum with derivatives in $\theta$.      

\begin{Remark}
\label{RemThm1.2}
Note that the second part of Theorem \ref{Thm1} on the logarithmic decomposition
is now obvious from Proposition \ref{Prop1} and on use of \eqref{2.9} and \eqref{2.10}.
\end{Remark}

\subsection{Series representation of $W_2$}

Note from (\ref{2.10}) that
\begin{equation}
\label{3.1}
W_2 (z) = -\frac{A}{\pi T} \left (U_0-{\bar U}_0 \right ) W_{2,1} (z) 
+ \frac{A}{\pi T} \left (U_0 - {\bar U}_0 \right )  W_{2,2} (z)
\end{equation}
where
\begin{equation} 
\label{3.2}
W_{2,1} (z) = \log \left ( \frac{z-A}{z-d} \right ) = \log \left ( 1 + \frac{\sqrt{\rho} s}{z-d} \right )   
\end{equation}
\begin{equation}
\label{3.3.0}
W_{2,2} (z) = 
\log \left ( \frac{z+A}{z+d} \right ) = \log \left ( 1 - \frac{\sqrt{\rho} s}{z+d} \right )  
\end{equation}
We note that $W_{2,1}$ is an analytic function outside the circle $|z-d|=s$, whose Laurent series
in powers of ${s}/({z-d})$ has a radius of convergence
$\rho^{-1/2}$, while $W_{2,2}$ is an analytic function outside $|z+d|=s$, whose Laurent series 
in powers of ${s}/({z+d})$ also has a radius of convergence
$\rho^{-1/2}$. If we use (\ref{eq:eq2})-(\ref{eq:eq3}), then it is clear that
\begin{equation}
\label{3.4}
W_{2,1} (Z (\zeta)) 
= -\log \left ( \frac{d+A}{2A} \right )
- \log \left ( 1 +\rho^{1/2} \zeta \right )  
\end{equation}
The radius of convergence in powers of $\zeta$ 
for the analytic function $W_{2,1} (Z (\zeta))$ inside the
unit circle is again $\rho^{-1/2} $ and there is no advantage using the
$\zeta$ scheme versus the $z$ scheme.
Also, 
\begin{equation}
\label{3.4}
W_{2,2} (Z (\zeta)) 
= -\log \left ( \frac{d+A}{2A} \right )
- \log \left ( 1 +\rho^{3/2} \zeta^{-1} \right )  
\end{equation}
and, again, the radius of convergence of a series in $\rho/\zeta$ of this analytic function in $|\zeta| > \rho$ is
$\rho^{-1/2}$ 
and there is no advantage using the $\zeta$ scheme over the $z$ scheme. When $\rho$ is very
close to 1, the geometric decay factor is poor and one needs a large number of
terms in the series represention for each of $W_{2,1}$ and $W_{2,2}$. Indeed, note that 
on the circle $\zeta=-e^{i \nu}$,
\begin{equation}
\label{3.5.1}
\Big | \partial_\nu^k W_{2,1} \left ( Z (-e^{-i \nu} ) \right )  \Big | \le C_0 T^{-k} \ ,
\end{equation} 
which is reflected in the Taylor series coefficient of $W_{2,1}  (\zeta) = \sum_{j=0}^\infty w_j \zeta^j$ in the 
observation
\begin{equation}
\label{3.6}
j^k |w_j| = \frac{j^k}{j} \rho^{1/2 j} = j^{k-1} e^{-\pi T j/2} \le C_0 T^{-(k-1)}.
\end{equation}
From the logarithmic expansion in (\ref{3.2}) in powers of $s/(z-d)$, i.e.,
$W_{2,1} =\sum_{k=1}^\infty \frac{W_j s^j}{(z-d)^j} $, we also have
\begin{equation}
\label{3.6.1}
j^k |W_j| \le C_0 T^{-(k-1)}.
\end{equation}
Similar statements 
hold for 
series expansions of $W_{2,2}$ in powers of $\rho/\zeta$ or
$s/(z+d)$.

We now seek to prove that the hybrid basis scheme
does
better than (\ref{3.6}) {or (\ref{3.6.1})}. 
We demonstrate this only for $W_{2,1}$ 
since an analogous construction is possible for $W_{2,2}$, with
$s/(z-d)$ replaced by $s/(z+d)$ and $\zeta$ replaced by $\rho/\zeta$ in the series representation.

\subsection{Hybrid decomposition of $W_{2,1}$}

To be more precise, we prove the following
theorem:
\begin{Proposition}
\label{prop3}
There exists  decomposition of 
\begin{equation}
W_{2,1} = \omega_1 (\zeta) + \omega_2 \left ( \frac{s}{z-d} \right ) ~\ ,~ 
W_{2,2} = \omega_3 (\rho \zeta^{-1}) + \omega_4 \left ( \frac{s}{z+d} \right ) \ , 
\end{equation}
where each of $\omega_1$, $\omega_2$, $\omega_3$ and $\omega_4$ are analytic when
its argument is inside the unit circle and for any $m=1,2,3,4$,
\begin{equation}
\omega_m (\eta) =  
\sum_{j=0}^\infty a_{j,m} \eta^j 
\end{equation}
such that for any integer $k \ge 1$, 
\begin{equation}
\Big | j^k a_{j,m} \Big | \  
\le \frac{M_k}{T^{k/2}},
\end{equation}
where $M_k$ is independent of $j$ and $T$, ({\it i.e.} of $\rho$)).
\end{Proposition}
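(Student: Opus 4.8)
The plan is to establish the decomposition for $W_{2,1}$ explicitly; the cases $W_{2,2},\omega_3,\omega_4$ then follow verbatim after the substitutions $\zeta\mapsto\rho/\zeta$ and $s/(z-d)\mapsto s/(z+d)$ already indicated in the text. Recall from (\ref{3.2}) that, as a function of $\zeta$, $W_{2,1}=\log(1-\rho)-\log(1+\sqrt\rho\,\zeta)$, while as a function of $u:=s/(z-d)$ it equals $\log(1+\sqrt\rho\,u)$, the two variables being linked by the M\"obius relation $u=-(\zeta+\sqrt\rho)/(1+\sqrt\rho\,\zeta)$, $\zeta=-(u+\sqrt\rho)/(1+\sqrt\rho\,u)$. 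In either single-variable expansion the slow decay is caused by one logarithmic branch point sitting a distance $\asymp T$ outside the relevant unit circle (at $\zeta=-\rho^{-1/2}$, respectively $u=-\rho^{-1/2}$). The key idea is to split that one logarithm into two logarithms whose branch points sit a distance $\asymp\sqrt T$ outside their respective circles, sharing the difficulty between the two variables.

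Concretely, I would introduce a free real rate $\lambda\in(0,1)$ and set
\[
\omega_1(\zeta)=-\log(1+\lambda\zeta),\qquad
\omega_2(u)=\log\bigl[(1-\lambda\sqrt\rho)+(\sqrt\rho-\lambda)\,u\bigr],
\]
absorbing additive constants into $a_{0,m}$. From the M\"obius relation one has the two elementary identities $1+\sqrt\rho\,\zeta=(1-\rho)/(1+\sqrt\rho\,u)$ and $1+\lambda\zeta=[(1-\lambda\sqrt\rho)+(\sqrt\rho-\lambda)u]/(1+\sqrt\rho\,u)$, and substituting these shows $\omega_1(\zeta)+\omega_2(u(\zeta))=-\log(1+\sqrt\rho\,\zeta)+\mathrm{const}$, which is $W_{2,1}$ up to a constant. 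The decisive point is that the two copies of $\log(1+\sqrt\rho\,u)$ cancel, so that $\omega_2$ is a single clean logarithm and does \emph{not} inherit the dangerous distance-$T$ branch point at $u=-\rho^{-1/2}$; its only singularity is at $u_*=-(1-\lambda\sqrt\rho)/(\sqrt\rho-\lambda)$.

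Next I would estimate coefficients. Each $\omega_m$ has the form $\pm\log(1+r\eta)$ with $a_j=\pm(-1)^{j+1}r^j/j$, so $j^k|a_j|\le\max_{j\ge1}j^{k-1}r^j\le M_k(\log r^{-1})^{-(k-1)}$. For $\omega_1$ the rate is $r_1=\lambda$ with $\log r_1^{-1}=-\log\lambda$; for $\omega_2$ the rate is $r_2=(\sqrt\rho-\lambda)/(1-\lambda\sqrt\rho)$, and when $-\log\lambda\gg-\log\sqrt\rho$ a short expansion gives $\log r_2^{-1}\asymp 2(-\log\sqrt\rho)/(-\log\lambda)$. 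Hence the \emph{product} of the two margins is pinned at $\log r_1^{-1}\cdot\log r_2^{-1}\asymp -\log\rho\asymp T$, independently of $\lambda$, so the best one can do is balance them: choosing $-\log\lambda\asymp\sqrt{-\log\rho}\asymp\sqrt T$ makes both $\log r_m^{-1}\asymp\sqrt T$, whence $j^k|a_{j,m}|\le M_k\,T^{-(k-1)/2}\le M_k\,T^{-k/2}$ for every $k\ge1$ and $T\le T_0$. Analyticity inside the respective unit discs is then immediate, since $\lambda^{-1}>1$ and $|u_*|=(1-\lambda\sqrt\rho)/(\sqrt\rho-\lambda)>1$.

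The hard part is really the algebraic miracle, to be confirmed by the short computation above, that subtracting $\log(1+\lambda\zeta)$ removes the distance-$T$ singularity of $\omega_2$ in the $u$-plane exactly, leaving only the singularity at the balanced distance $\asymp\sqrt T$; together with the balancing identity that the two margins multiply to $\asymp T$, which forces the geometric-mean choice $-\log\lambda\asymp\sqrt T$ and thereby produces the square-root improvement. This pinned product is also precisely why no split of this type can push the exponent below $k/2$, in agreement with the optimality claimed in Theorem \ref{Thm1}.
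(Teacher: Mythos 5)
Your proof is correct, and it takes a genuinely different route from the paper's. The paper constructs $\omega_1,\omega_2$ implicitly via a smooth cut-off $\Phi(\nu)$ on the boundary $|\zeta|=1$ (supported where the discs are closest), splitting $\Re\,W_{2,1}$ into a part that is smooth on the $\nu$-scale $\delta$ and a part that is smooth on the $\theta$-scale $T/\delta$, then estimating Fourier coefficients through a chain of derivative bounds (Lemmas \ref{lem0}--\ref{lemPhi}, Propositions \ref{prop0} and \ref{prop6}) and balancing with $\delta=T^{1/2}$. You instead exploit the fact that $W_{2,1}$ is exactly one logarithm and that $\zeta$ and $u=s/(z-d)$ are related by an explicit M\"obius map, producing a closed-form split $W_{2,1}=-\log(1+\lambda\zeta)+\log\bigl[(1-\lambda\sqrt\rho)+(\sqrt\rho-\lambda)u\bigr]$; I have checked the identity $1+\lambda\zeta=\bigl[(1-\lambda\sqrt\rho)+(\sqrt\rho-\lambda)u\bigr]/(1+\sqrt\rho\,u)$ and it does make the two copies of $\log(1+\sqrt\rho\,u)$ cancel exactly, so the decomposition is exact with principal branches (all arguments have positive real part on the closed discs). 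Your parameter $\lambda$ plays precisely the role of the paper's $\delta$, the pinned product $\log r_1^{-1}\cdot\log r_2^{-1}\asymp T$ is the analogue of the paper's trade-off $\delta^{-k}$ versus $(\delta/T)^{k}$, and your geometric-mean choice $-\log\lambda\asymp\sqrt{T}$ corresponds to $\delta=T^{1/2}$. What your approach buys is full explicitness, elementary coefficient formulas $r^j/j$, a marginally sharper bound $M_kT^{-(k-1)/2}$, and a transparent heuristic for why $k/2$ is the natural exponent; what the paper's cut-off construction buys is generality, since it does not depend on $W_{2,1}$ being a single logarithm and reuses machinery (the derivative estimates for $\nu(\theta)$) needed elsewhere in \S\ref{Saleh}. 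Two small caveats: you should state explicitly that $\lambda<\sqrt\rho$ for $T\le T_0$ so that $r_2>0$ (it holds for your choice), and your closing optimality remark only rules out splits of this particular two-logarithm form, not arbitrary decompositions, so it should be presented as a heuristic rather than as a proof of the sharpness claim in Theorem \ref{Thm1}.
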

{The proof} of Proposition \ref{prop3} 
will await preliminary Lemmas that pertain to 
construction of the decomposition of $W_{2,1}$ into $\omega_1$ and $\omega_2$ and 
smoothness properties of each the $\nu$ and $\theta$ representations
on the circular boundaries and uniform control of all derivatives 
as $T \rightarrow 0^+$ ($\rho \rightarrow 1^-$). The arguments are nearly identical
for $W_{2,2}$.

\subsection{Construction of $\omega_1$, $\omega_2$,  Proof of Proposition \ref{prop3} and Theorem 1}

\begin{Definition}
\label{Def1}
We choose $\delta$ that shrinks with $T$ as $T \rightarrow 0$ so as to satisfy
$1 >> \delta >> T^{1-\epsilon_1} $ for some $1 > \epsilon_1 > 0$ independent of $T$. A more precise choice
will be made later to optimize the decay rate of power series. 
We define an even, smooth, cut-off function 
$\Phi \in {\bf C}^\infty [-\pi, \pi]$ so that
\begin{equation}
\label{4.0}
\Phi (\nu) = 1  ~~{\rm for} ~ |\nu| \le \delta \ , \Phi (\nu) = 0 ~~{\rm for} ~|\nu| \in [2 \delta, \pi] 
\end{equation}
and with property $\Big | \partial_\nu^l \Phi (\nu) \Big | \lesssim \delta^{-l}$. There are standard choices
for such functions (see Evans PDE book for instance).
Corresponding to such a choice of $\Phi$, we define a  
single valued analytic function $\omega_{1} (\zeta)$ analytic in $ |\zeta| < 1$ such that
on the boundary $|\zeta|=1$ 
with representation $\zeta=-e^{{\rm i} \nu}$, $\nu \in [-\pi, \pi]$ 
\begin{equation}
\label{4.1}
\Re ~ \omega_1 (- e^{{\rm i} \nu} ) = 
\left (1 - \Phi (\nu) \right ) \Re ~W_{2,1} \left (Z (-e^{{\rm i} \nu} ) \right )
\end{equation}
We define an analytic function $\omega_2 (\zeta)$ analytic in $|\zeta|< 1$ so that  
on 
$\zeta=-e^{{\rm i}\nu}$ 
\begin{equation}
\label{4.5}
\Re ~\omega_2 (- e^{{\rm i} \nu} ) = 
\Phi (\nu) ~\Re ~W_{2,1} \left ( Z (- e^{{\rm i} \nu})  \right ) 
\end{equation}
\end{Definition}
\begin{Lemma}
\label{lem0}
Define $H(\nu) =  \left (1 -\Phi (\nu) \right ) 
\Re ~W_{2,1} \left (Z (- e^{{\rm i} \nu} ) \right )$.
$H$ may be extended to be a smooth $2\pi$-periodic function of $\nu$ satisfying 
\begin{equation}
\label{eqlem0}
\Big | \partial_{\nu}^k H (\nu )  \Big | \le \frac{M_k}{\delta^k},
\end{equation}
where $M_k$ may be chosen independent of $\delta$ and $T$ ($\rho$). 
\end{Lemma}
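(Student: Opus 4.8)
The plan is to reduce the estimate to the explicit boundary formula for $W_{2,1}$ and then to exploit the fact that the factor $1-\Phi$ confines every derivative to the region $\delta \le |\nu| \le \pi$, on which the relevant logarithm stays uniformly bounded away from its near-singularity. On $\zeta = -e^{{\rm i}\nu}$ the representation of $W_{2,1}(Z(\zeta))$ derived above gives
\begin{equation}
W_{2,1}\bigl(Z(-e^{{\rm i}\nu})\bigr) = -\log\left(\frac{d+A}{2A}\right) - \log(1-w), \qquad w := \rho^{1/2} e^{{\rm i}\nu},
\end{equation}
so that $g(\nu) := \Re\,W_{2,1}(Z(-e^{{\rm i}\nu})) = -\log(\tfrac{d+A}{2A}) - \tfrac12\log(1 - 2\rho^{1/2}\cos\nu + \rho)$ and $H = (1-\Phi)\,g$. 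Since $\rho^{1/2} < 1$ the quantity $1-w$ never vanishes, so $g$ is already $C^\infty$ and $2\pi$-periodic; the smooth periodic extension of $H$ is therefore immediate, and the whole content of the lemma is the uniform-in-$T$ bound on its derivatives.

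The key step I would carry out is a lower bound on $|1-w|$ that is uniform in $T$. Writing
\begin{equation}
|1-w|^2 = 1 - 2\rho^{1/2}\cos\nu + \rho = (1-\rho^{1/2})^2 + 2\rho^{1/2}(1-\cos\nu),
\end{equation}
and using $1-\cos\nu \ge (2/\pi^2)\nu^2$ on $[-\pi,\pi]$ together with $\rho^{1/2} = e^{-\pi T/2} \ge e^{-\pi T_0/2}$ for $T \in (0, T_0]$, I obtain $|1-w| \ge c_0 |\nu|$ with $c_0$ independent of $T$; hence $|1-w| \ge c_0 \delta$ on the support of $1-\Phi$. This is precisely where the cutoff is used: it excises the zone $|\nu| < \delta$ in which $|1-w|$ could be as small as $1-\rho^{1/2} = O(T)$. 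Next, a short induction (equivalently, the composite-derivative identity \eqref{eqComp} with the inner map $\nu \mapsto w$, using $\partial_\nu w = {\rm i} w$) shows that for $k \ge 1$
\begin{equation}
\partial_\nu^k \log(1-w) = \frac{P_k(w)}{(1-w)^k},
\end{equation}
where $P_k$ is a polynomial with $|P_k(w)| \le C_k$ for $|w| \le 1$. Taking real parts and inserting the lower bound on $|1-w|$ yields $|\partial_\nu^k g(\nu)| \le C_k \delta^{-k}$ on $\delta \le |\nu| \le \pi$, with $C_k$ independent of $T$ and $\delta$.

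Finally I would assemble the bound for $H$ by the Leibniz rule,
\begin{equation}
\partial_\nu^k H = (1-\Phi)\,\partial_\nu^k g - \sum_{l=1}^k \binom{k}{l} (\partial_\nu^l \Phi)\,\partial_\nu^{k-l} g .
\end{equation}
The first term is supported in $|\nu| \ge \delta$ and is bounded by $C_k \delta^{-k}$ by the previous step. In each term of the sum, $\partial_\nu^l\Phi$ is supported in $\delta \le |\nu| \le 2\delta$ with $|\partial_\nu^l\Phi| \lesssim \delta^{-l}$ by \eqref{4.0}, while there $|\partial_\nu^{k-l} g| \le C_{k-l}\,\delta^{-(k-l)}$, so each product is $\lesssim \delta^{-k}$. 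Summing the finitely many terms gives $|\partial_\nu^k H| \le M_k \delta^{-k}$ for $k \ge 1$ with $M_k$ independent of $\delta$ and $T$, which is \eqref{eqlem0}. (For $k=0$ one only has $|H| \lesssim 1 + |\log\delta|$, but this controls merely the mean of $H$, i.e.\ the unconstrained constant Fourier mode, and is harmless in the application to Proposition \ref{prop3}.) The single substantive point, and the only place where uniformity could be lost, is the $T$-independent lower bound on $|1-w|$ on the support of $1-\Phi$; once that is in hand the rest is routine bookkeeping, and the argument for $W_{2,2}$ is identical after the obvious substitutions.
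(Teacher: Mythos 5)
Your proof is correct and follows essentially the same route as the paper's: smoothness and $2\pi$-periodicity are automatic because $\rho^{1/2}<1$ keeps the logarithm nonsingular on $|\zeta|=1$, the cutoff confines all derivatives to $|\nu|\ge\delta$, and the Leibniz rule combines $|\partial_\nu^l\Phi|\lesssim\delta^{-l}$ with the estimate $\delta^k\big|\partial_\nu^k\log\big(1-\rho^{1/2}e^{{\rm i}\nu}\big)\big|\le C_0$ on $|\nu|\in[\delta,\pi]$. The only difference is that you actually prove that last estimate (via the uniform lower bound $|1-\rho^{1/2}e^{{\rm i}\nu}|\ge c_0|\nu|$ and the form $P_k(w)/(1-w)^k$ of the derivatives), which the paper merely asserts, and you correctly flag that the $k=0$ case only gives $O(1+|\log\delta|)$ --- a caveat the paper's statement glosses over but which is harmless in the application.
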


\begin{proof}
Since $1-\Phi$ is a smooth cut-off function with support in $\pi \ge |\nu| \ge \delta$ and $1-\Phi =1$ for
$\pi \ge |\nu| \ge 2 \delta$, the $2\pi$ periodicity and smoothness of $\log (1-\rho^{-1/2} e^{i \nu} )$ 
translates into smoothness of $H$ with $\partial_\nu^k H(\pi) = \partial_\nu^k H(-\pi)$ for any integer $k \ge 0$. 
We only need to show the bounds. 
We note that 
$\Big | \delta^j \partial_\nu^j \Phi \Big | \le c_0$, independent of $T$ and $\delta$, and for $|\nu| \in [\delta, \pi]$, 
for $k \ge 1$,
\begin{equation}
\delta^k \Big |  
\partial_\nu^k \log \left ( 1-\rho^{-1/2} e^{i \nu} \right ) \Big |
\le C_0 
\end{equation}
Therefore, from a Leibnitz representation of the derivatives of a product, the Lemma follows immediately.
\end{proof}
\begin{Proposition}
\label{prop0}
$\omega_1$ defined in Definition \ref{Def1} with representation
\begin{equation}
\label{eqom1lem}
\omega_1 (\zeta) = \sum_{j=0}^\infty a_j \zeta^j 
\end{equation}
satisfies
\begin{equation}
j^k |a_j| \le M_k/\delta^k 
\end{equation}
for any $k \ge 1$, where $M_k$ is independent of $\delta$ and $T$.
\end{Proposition}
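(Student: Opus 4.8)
The plan is to exploit the standard correspondence between the Taylor coefficients of a function analytic in the unit disc and the Fourier coefficients of the boundary values of its real part. Writing $\zeta = -e^{{\rm i}\nu}$ on $|\zeta|=1$, the series in \eqref{eqom1lem} gives $\omega_1(-e^{{\rm i}\nu}) = \sum_{j\ge 0}(-1)^j a_j e^{{\rm i}j\nu}$, so that its real part is exactly the boundary datum $H(\nu)$ introduced in Lemma \ref{lem0}. Matching with the (real) Fourier expansion $H(\nu) = \sum_{n\in\ZZ}\hat{H}_n e^{{\rm i}n\nu}$, where $\hat{H}_{-n}=\overline{\hat{H}_n}$, I would read off $\Re a_0 = \hat{H}_0$ and, for every $j\ge 1$, the relation $(-1)^j a_j = 2\hat{H}_j$, hence $|a_j| = 2|\hat{H}_j|$. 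The imaginary part of $a_0$ is left undetermined by the real-part condition, but this is harmless: the bound to be proved carries the factor $j^k$ with $k\ge 1$, which annihilates the $j=0$ term outright.

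The second step is to bound the Fourier coefficients $\hat{H}_j$ using the derivative estimates already supplied by Lemma \ref{lem0}. Since $H$ is smooth and $2\pi$-periodic, integrating
\begin{equation}
\hat{H}_j = \frac{1}{2\pi}\int_{-\pi}^\pi H(\nu)\, e^{-{\rm i}j\nu}\,d\nu \nonumber
\end{equation}
by parts $k$ times and discarding the boundary terms by periodicity yields
\begin{equation}
\hat{H}_j = \frac{1}{2\pi({\rm i}j)^k}\int_{-\pi}^\pi \partial_\nu^k H(\nu)\,e^{-{\rm i}j\nu}\,d\nu, \nonumber
\end{equation}
whence $|\hat{H}_j| \le j^{-k}\sup_\nu|\partial_\nu^k H(\nu)| \le M_k/(j^k\delta^k)$ directly from \eqref{eqlem0}. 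Combining with $|a_j|=2|\hat{H}_j|$ gives $j^k|a_j| \le 2M_k/\delta^k$ for all $j\ge 1$, and absorbing the factor $2$ into the constant completes the proof.

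I do not expect a genuine obstacle here. The argument is the classical mechanism by which smoothness of the boundary datum controls the decay of coefficients, and Lemma \ref{lem0} has been arranged precisely to deliver the $\delta^{-k}$ derivative bounds that feed the integration by parts. The only points requiring a little care are bookkeeping the phase $\zeta=-e^{{\rm i}\nu}$, whose factor $(-1)^j$ has modulus one and so does not affect any modulus estimate, and remembering that only $\Re a_0$ is pinned down by the real-part data. Neither point threatens the stated bound.
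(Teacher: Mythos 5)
Your proof is correct and follows essentially the same route as the paper's: identify the Taylor coefficients of $\omega_1$ with the Fourier coefficients of the boundary datum $H$, then convert the derivative bounds of Lemma \ref{lem0} into coefficient decay by integration by parts, with the imaginary part of $a_0$ fixed by the normalization $\Im[\omega_1(0)]=0$. The paper's (terser) proof additionally notes that $H$ is even so as to write a pure cosine series, but that detail is immaterial to the estimate.
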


\begin{proof}
We simply note that on the boundary $\zeta=-e^{{\rm i} \nu}$, 
from the Fourier representation and noting that $H$ is even we have
\begin{equation}
\label{eqomega1}
\Re ~\omega_1 \left (-e^{{\rm i} \nu} \right ) = H(\nu) = \sum_{j=0}^\infty (-1)^j a_j \cos (j \nu),
\end{equation}
where previous Lemma implies
\begin{equation}
j^k |a_j| \le M_k/\delta^k.
\end{equation}
Hence the Lemma follows since  
\eqref{eqomega1} implies 
\eqref{eqom1lem}
because $\Im[\omega_1 (0)]=0$. 
\end{proof}
We now consider the problem of representing $\omega_2$. 
Note that on $\zeta=-e^{{\rm i} \nu}$
\begin{equation}
\Re ~\omega_2 \left (\boldsymbol{\zeta} (d+s e^{-{\rm i} \theta}) \right )
= \Phi (\nu (\theta) ) \log \left ( 1 + \rho^{-1/2} e^{{\rm i} \theta} \right ).
\end{equation}
Define
\begin{equation}
\theta_m = 2 \arctan \left \{ \left ( \frac{1+\sqrt{\rho}}{1-\sqrt{\rho}} \right ) \tan \delta \right \}. 
\end{equation}
From relation (\ref{3.3}) between $\nu$ and $\theta$,
$\Phi (\nu (\theta)) =0 $ for $|\theta| > \theta_m$. Also, $1  \gg \delta \gg T$ implies that
\begin{equation}
\label{eqthetam}
\theta_m = \pi - \frac{2}{\tan \delta}  \left ( \frac{1-\sqrt{\rho}}{1+\sqrt{\rho}} \right ) + O (T^3/\delta^3).
\end{equation}

\begin{Lemma}
\label{lemnu}
Consider the change of variable $\nu =\nu (\theta)$ defined by \eqref{3.3}. 
For $\nu \in [-2 \delta, 2\delta ]$, {\it i.e.} $\theta \in [-\theta_m, \theta_m]$, where
there exists constant $C_0$ independent of $T$ and $\delta$ so that 
\begin{equation}
\Big | \partial_\theta^k  \nu \Big |  \le C_0 \frac{\delta^{k+1}}{T^k}.
\end{equation}
\end{Lemma}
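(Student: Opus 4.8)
The plan is to reduce everything to a single application of the composite-derivative (Fa\`a di Bruno) identity \eqref{eqComp}, exploiting that on the restricted window $|\nu|\le 2\delta$ the map never leaves a small-argument regime (so that, unlike in Lemma \ref{lem1}, no case-splitting is required). First I would rewrite \eqref{3.3} as $\nu = 2\arctan(\kappa x)$, where $x=\tan(\theta/2)$ and $\kappa = (1-\sqrt\rho)/(1+\sqrt\rho)$, and record two elementary facts: expanding $\sqrt\rho = e^{-\pi T/2}$ gives $cT\le\kappa\le CT$ for $T\in(0,T_0]$; and since $\nu$ is odd in $\theta$ it suffices to treat $\theta\in[0,\theta_m]$, on which $x\le\tan(\theta_m/2)=\kappa^{-1}\tan\delta\le C\delta/T$ by the definition of $\theta_m$. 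Because $\delta\gg T$, the last bound yields $1+x\le C\delta/T$ throughout, and hence the decisive smallness $\kappa(1+x)\le C\delta<1$.

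Next I would feed \eqref{eqComp} with outer function $F(u)=\arctan(\kappa u)$ and inner function $\eta=x(\theta)$. For the inner derivatives I can quote the bound $|\partial_\theta^m x|\le C(1+x)^{m+1}$ already used in the proof of Lemma \ref{lem1} (it holds because $\partial_\theta x=\tfrac12(1+x^2)$ renders $\partial_\theta^m x$ a polynomial of degree $m+1$ in $x$). For the outer derivatives I would use $|\partial_u^p\arctan(\kappa u)|=\kappa^p|\arctan^{(p)}(\kappa u)|\le C_p\kappa^p$, which follows from the uniform boundedness of all derivatives of $\arctan$ on $\mathbb{R}$. A term of \eqref{eqComp} indexed by $(j_1,\dots,j_l)$ with $\sum_m m j_m=l$ and $p:=\sum_m j_m$ is then bounded by $C_l\,\kappa^p(1+x)^{l+p}$.

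Finally I would sum the finitely many partitions. Inserting $\kappa\le CT$ and $1+x\le C\delta/T$ converts each term into $C_l\,T^p(\delta/T)^{l+p}=C_l\,\delta^{l+p}/T^l$; since $1\le p\le l$ and $\delta<1$, the geometric sum is controlled by its $p=1$ term, giving the claimed $|\partial_\theta^k\nu|\le C_0\,\delta^{k+1}/T^k$ (the case $k=0$ being merely $|\nu|\le2\delta$). I expect the only genuinely delicate point to be recognizing that it is the lowest-order term, not the top-order one, that dominates: each increment of $p$ multiplies a term by $\kappa(1+x)=O(\delta)<1$. This is precisely where the restriction $|\nu|\le 2\delta$ buys the extra power of $\delta$ relative to the full-range estimate of Lemma \ref{lem1}, for which $x$ may reach $O(1/T)$ and $\kappa(1+x)$ is only $O(1)$.
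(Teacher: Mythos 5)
Your proposal is correct and follows essentially the same route as the paper: both introduce $x=\tan(\theta/2)$, use $|\partial_\theta^m x|\le C(1+x)^{m+1}$, note that the argument of $\arctan$ stays $O(\delta)$ on the window $|\theta|\le\theta_m$, and apply the composite-derivative identity \eqref{eqComp}, with the extra power of $\delta$ coming from the fact that every term carries at least one factor of the small quantity $\kappa(1+x)=O(\delta)$. Your version is a slightly more explicit single-step Fa\`a di Bruno bound where the paper chains through the intermediate variable $y=\kappa x$, but the substance is identical.
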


\begin{proof}
It is convenient to introduce the intermediate variables 
\begin{equation}
x= \tan \left (\frac{\theta}{2}\right ), \qquad y = \left ( \frac{1+\sqrt{\rho}}{1-\sqrt{\rho}} \right ) x.
\end{equation}
 Then $\nu = 2 \arctan y$. 
Since $\delta$ is small, it follows that $y$, which is at most $O(\delta)$ is also small 
for $\theta \in [-\theta_m, \theta_m]$. 
It immediately follows that 
all derivatives of $\nu$ with respect to $y$ are bounded independent of any parameter. 
Now consider $x=x(\theta)) $. Clearly derivatives of $x(\theta)$ can only be large for 
$|\theta|$ near $\pi$, where there is a simple pole. It follows that $\Big | \partial_\theta^l x \Big | \le c_0 x^{l+1}$.  
Therefore, $\Big | \partial_\theta^k y \Big | \le c_0 T x^{k+1} \le c_0 y^{k+1}/T^{k} $. Since $y = O (\delta)$,
the Lemma immediately
follows.
\end{proof}

\begin{Lemma}
\label{lemPhi}
For the cut-off function $\Phi$ as defined earlier, with $\theta \in [-\theta_m, \theta_m]$,
for any integer $k$, there exists constant $c_0$ independent of $\delta$ and $T$ so that
\begin{equation}
\Big | \partial_\theta^k  \Phi (\nu (\theta) ) \Big | \le c_0 \frac{\delta^k}{T^k}.
\end{equation}
\end{Lemma}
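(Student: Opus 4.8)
The plan is to read off Lemma \ref{lemPhi} as a direct consequence of the composite-derivative (Fa\`a di Bruno) identity \eqref{eqComp}, feeding in the two bounds already established: the defining property $|\partial_\nu^l \Phi(\nu)| \lesssim \delta^{-l}$ from Definition \ref{Def1}, and the chain-rule estimate $|\partial_\theta^m \nu| \le C_0\, \delta^{m+1}/T^m$ from Lemma \ref{lemnu}, valid precisely on $\theta \in [-\theta_m, \theta_m]$ (equivalently $\nu \in [-2\delta, 2\delta]$). First I would note that the restriction to $\theta \in [-\theta_m,\theta_m]$ is exactly what is needed: outside this range $\Phi(\nu(\theta)) \equiv 0$, so there is nothing to prove there, while inside it Lemma \ref{lemnu} applies.

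Apply \eqref{eqComp} with $F = \Phi$ and $\eta = \nu$, writing $p := \sum_{m=1}^l j_m$ for the total order of the $\nu$-derivative falling on $\Phi$ in each term of the sum. Then
\begin{equation}
\Big| \partial_\theta^l \Phi(\nu(\theta)) \Big|
\le \sum_{\substack{j_m \ge 0 \\ \sum_{m} m j_m = l}}
|C_{j_1,\dots,j_l}| \; \big|\partial_\nu^{p}\Phi\big| \prod_{m=1}^l \big|\partial_\theta^m \nu\big|^{j_m}
\le C_l \sum \delta^{-p} \prod_{m=1}^l \left( \frac{\delta^{m+1}}{T^m} \right)^{j_m}.
\end{equation}
The key bookkeeping is to collect the exponents of $\delta$ and $T$ in each term. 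Using the constraint $\sum_m m j_m = l$ gives $\prod_m T^{-m j_m} = T^{-l}$, while the $\delta$-exponents sum to $\sum_m (m+1) j_m = \sum_m m j_m + \sum_m j_m = l + p$. Hence the product contributes $\delta^{l+p}/T^l$, and multiplying by the $\delta^{-p}$ coming from $\partial_\nu^p \Phi$ the powers of $p$ cancel exactly, leaving $\delta^l/T^l$ in every term.

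Since the index set in \eqref{eqComp} is finite and depends only on $l$, summing the finitely many terms yields $|\partial_\theta^l \Phi(\nu(\theta))| \le c_0\, \delta^l/T^l$ with $c_0$ depending on $l$ but independent of $\delta$ and $T$, which is the claim with $k = l$. The computation is otherwise routine; the only point requiring care --- and the one genuine obstacle --- is verifying that the gain $\delta^{-p}$ produced by differentiating the $\delta$-scale cutoff is precisely compensated by the $\delta^{+p}$ generated by the $p$-fold occurrence of the factors $\partial_\theta^m \nu \sim \delta^{m+1}/T^m$, so that the final bound is independent of $p$. This cancellation, a consequence of the identity $\sum_m (m+1)j_m = l + p$, is exactly what makes the estimate scale cleanly as $(\delta/T)^l$.
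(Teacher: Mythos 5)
Your proposal is correct and follows exactly the paper's own (very terse) proof: apply the composite-derivative identity \eqref{eqComp} with $F=\Phi$, $\eta=\nu$, and combine the cutoff bound $|\partial_\nu^p\Phi|\lesssim\delta^{-p}$ with Lemma \ref{lemnu}. Your explicit bookkeeping of the exponents, showing $\delta^{-p}\cdot\delta^{l+p}T^{-l}=(\delta/T)^l$, simply fills in the cancellation the paper leaves implicit.
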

\begin{proof}
We simply make use of (\ref{eqComp}) 
and invoke the previous Lemma and 
the bounds  $\Big | \partial_\nu^l \Phi \Big | \le C \delta^{-l}$.  
\end{proof}
\begin{Lemma}
\begin{equation}
H_2 (\theta) = \Re ~\omega_2 \left (-e^{i \nu (\theta)} \right )
=\Phi (\nu (\theta) ) \Re ~\log \left ( 1 + \rho^{1/2} e^{i \theta} \right ) 
\end{equation}
Then for any $k \ge 1$, for $\theta \in [-\pi, \pi ]$,  
\begin{equation}
\Big | \partial_\theta^k H_2 (\theta) \Big | \le c_0 \frac{\delta^k}{T^k}     
\end{equation}
\end{Lemma}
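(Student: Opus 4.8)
The plan is to write $H_2(\theta) = \Phi(\nu(\theta))\, L(\theta)$ with $L(\theta) := \Re\log g(\theta)$ and $g(\theta) := 1 + \rho^{1/2} e^{{\rm i}\theta}$, and to estimate $\partial_\theta^k H_2$ by Leibniz's rule, combining Lemma \ref{lemPhi} for the cut-off factor with a separate bound for $L$ valid on the support of $\Phi(\nu(\cdot))$. Since $\Phi(\nu(\theta))$ and all its $\theta$-derivatives vanish for $|\theta| > \theta_m$, every term of the Leibniz expansion is supported on $[-\theta_m,\theta_m]$, and at $\theta=\pm\theta_m$ all derivatives vanish, so $H_2$ is globally $C^\infty$ and it suffices to control $\partial_\theta^m L$ on $[-\theta_m,\theta_m]$ for $0 \le m \le k$.

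The main step is a lower bound for $|g|$ on the support. I would use $|g(\theta)|^2 = 1 + 2\rho^{1/2}\cos\theta + \rho = (1-\rho^{1/2})^2 + 4\rho^{1/2}\cos^2(\theta/2)$, which is decreasing on $[0,\pi]$, so its minimum over $[-\theta_m,\theta_m]$ is attained at $\theta=\theta_m$. There $\cos^2(\theta_m/2) = (1+\tan^2(\theta_m/2))^{-1}$ with $\tan(\theta_m/2) = ((1+\sqrt{\rho})/(1-\sqrt{\rho}))\tan\delta$; since $1-\sqrt{\rho} = O(T)$ while $\delta$ is of fixed small order this tangent is of order $\delta/T \gg 1$, and a short computation (consistent with (\ref{eqthetam})) gives $|g(\theta_m)|^2 \sim \pi^2 T^2/(4\delta^2)$, hence $|g(\theta)| \ge c_0\, T/\delta$ on $[-\theta_m,\theta_m]$. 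Because $g$ is entire with $|\partial_\theta^m g| = \rho^{1/2} \le 1$ for all $m \ge 1$, applying the composite-derivative identity (\ref{eqComp}) with $F = \log$ and $\eta = g$ expresses $\partial_\theta^m \log g$ as a finite sum of terms, each a bounded multiple of $g^{-p}\prod_m (\partial_\theta^m g)^{j_m}$ with $p = \sum_m j_m$ and $1 \le p \le m$; on the support each such term is $\le C_m/|g|^p \le C_m (\delta/T)^m$, where I used $\delta/T \ge 1$ (from $\delta \gg T^{1-\epsilon_1}$) to bound the lower powers by the top one. Taking the real part only improves the bound, so $|\partial_\theta^m L(\theta)| \le C_m (\delta/T)^m$ for $|\theta|\le\theta_m$.

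Finally, Leibniz's rule and Lemma \ref{lemPhi}, which supplies $|\partial_\theta^j \Phi(\nu(\theta))| \le c_0 (\delta/T)^j$, give $|\partial_\theta^k H_2| \le \sum_{j=0}^k \binom{k}{j} |\partial_\theta^j \Phi(\nu(\theta))|\,|\partial_\theta^{k-j} L| \le C_k (\delta/T)^k$, which is the asserted bound $c_0\,\delta^k/T^k$. I expect the only real obstacle to be the geometric lower bound $|g| \ge c_0 T/\delta$ on the support and its conversion into the $m$-th derivative estimate for $\log g$: this is precisely where the competition between the cut-off width $\delta$ and the small parameter $T$ is decisive, and once it is established the remainder is routine product-rule bookkeeping. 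The construction and estimates for the analogue built from $W_{2,2}$ are identical after replacing $s/(z-d)$ by $s/(z+d)$ and $\zeta$ by $\rho/\zeta$, as noted before Proposition \ref{prop3}.
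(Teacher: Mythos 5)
Your proof is correct and takes essentially the same route as the paper's: restrict to the support $[-\theta_m,\theta_m]$ of $\Phi(\nu(\cdot))$, bound $\partial_\theta^m\log\bigl(1+\rho^{1/2}e^{{\rm i}\theta}\bigr)$ there by $C_m\,\delta^m/T^m$, and combine with Lemma \ref{lemPhi} via the Leibniz rule. The only difference is that you explicitly derive the key lower bound $|1+\rho^{1/2}e^{{\rm i}\theta}|\ge c_0\,T/\delta$ on the support (via $|g|^2=(1-\rho^{1/2})^2+4\rho^{1/2}\cos^2(\theta/2)$ and the size of $\tan(\theta_m/2)$), a step the paper states as an unproved ``observation''.
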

\begin{proof}
We note that since the support of $\Phi (\nu (\theta) )$ is $|\theta| \le \theta_m$ and
$\pi -\theta_m = O \left (\frac{T}{\delta} \right )$, and the observation that for $\theta \in [-\theta_m, \theta_m]$,
$$ \Big | \partial_\theta^k \log \left ( 1 + \rho^{1/2} e^{i \theta} \right ) \Big |
\le M_k \frac{\delta^k}{T^k} $$   
for $M_k $ independent of $T$ and $\delta$.
Using the Leibnitz rule, 
and Lemma \ref{lemPhi}, the present Lemma follows.
\end{proof}

\begin{Proposition}
\label{prop6}
The analytic function $\omega_2 $ in $|\zeta| < 1$, {\it i.e.} $|z-d| > s$ has the representation
\begin{equation}
\omega_2 \left ( \boldsymbol{\zeta} (z) \right ) = \sum_{j=0}^\infty \frac{b_j s^j}{(z-d)^j}  \ ,
\end{equation}  
where for any $k \ge 1$,
\begin{equation}
\label{bjkbound}
j^k |b_j | \le \frac{M_0 \delta^k}{T^k}.
\end{equation}
\end{Proposition}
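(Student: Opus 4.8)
The plan is to run exactly the Fourier-coefficient argument already used for Proposition \ref{prop0}, but now in the variable $\eta := s/(z-d)$ rather than in $\zeta$, exploiting that the substantive estimate has already been established in the preceding Lemma. First I would record that $\omega_2$, being analytic in $|z-d| > s$ (equivalently $|\eta| < 1$) and normalized by $\Im[\omega_2(0)] = 0$ in the variable $\eta$ (i.e.\ at $z = \infty$), admits a Taylor expansion $\omega_2(\eta) = \sum_{j\ge 0} b_j \eta^j$; the asserted representation $\omega_2 = \sum_j b_j s^j/(z-d)^j$ is precisely this expansion written out. The only geometric input is the boundary correspondence: on $|z-d|=s$, parametrized by $z = d + s e^{-{\rm i}\theta}$, one has $\eta = s/(z-d) = e^{{\rm i}\theta}$, so the datum from Definition \ref{Def1} reads $\Re\,\omega_2(e^{{\rm i}\theta}) = H_2(\theta)$, with $H_2$ the smooth, $2\pi$-periodic function controlled in the preceding Lemma.

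Next I would extract the coefficients. Since $\nu(\theta)$ is odd and $\Phi$ is even, $\Phi(\nu(\theta))$ is even, and $\Re\log(1+\rho^{1/2}e^{{\rm i}\theta}) = \frac{1}{2}\log(1+2\rho^{1/2}\cos\theta+\rho)$ is even; hence $H_2$ is even in $\theta$. Combined with the normalization this forces every $b_j$ to be real, so that on the boundary $H_2(\theta) = \sum_{j\ge 0} b_j \cos(j\theta)$ is the Fourier cosine series of $H_2$ and, for $j \ge 1$,
\[
b_j = \frac{1}{\pi}\int_{-\pi}^{\pi} H_2(\theta)\cos(j\theta)\,d\theta .
\]
Integrating by parts $k$ times — the boundary contributions vanishing since $H_2$ and all its derivatives are $2\pi$-periodic, so each bracket $[\,\cdot\,]_{-\pi}^{\pi}$ cancels — gives $|b_j| \le (\pi j^k)^{-1}\int_{-\pi}^\pi |\partial_\theta^k H_2(\theta)|\,d\theta$.

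Finally I would insert the bound $|\partial_\theta^k H_2(\theta)| \le c_0\,\delta^k/T^k$ furnished by the preceding Lemma (valid on all of $[-\pi,\pi]$, since $H_2\equiv 0$ for $|\theta|>\theta_m$), obtaining $j^k|b_j| \le M_0\,\delta^k/T^k$ for every $k\ge 1$ with $M_0$ a suitable multiple of $c_0$, as claimed in \eqref{bjkbound}. This step is essentially a transcription of the proof of Proposition \ref{prop0}, with $H$ replaced by $H_2$ and the estimate $M_k/\delta^k$ replaced by $c_0\,\delta^k/T^k$, so I expect no genuine obstacle here. The real difficulty lies entirely upstream: it is the preceding Lemma, together with Lemmas \ref{lemnu} and \ref{lemPhi} and the composite-derivative identity \eqref{eqComp}, that converts the chain-rule bounds on $\Phi(\nu(\theta))$ and on $\log(1+\rho^{1/2}e^{{\rm i}\theta})$ near $\theta=\pi$ into the crucial $\delta^k/T^k$ growth in place of the $T^{-k}$ exhibited by the naive $z$- or $\zeta$-series. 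The only points meriting a line of care in the present argument are the reality and normalization of the $b_j$ and the vanishing of the integration-by-parts boundary terms, both of which follow at once from the evenness and periodicity of $H_2$.
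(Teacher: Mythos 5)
Your argument is correct and follows essentially the same route as the paper's own (very terse) proof: identify $\Re\,\omega_2$ on $|z-d|=s$ with the even function $H_2(\theta)$, read off the Fourier cosine coefficients as the Taylor coefficients $b_j$ of $\omega_2$ in the variable $s/(z-d)$, and convert the derivative bounds on $H_2$ from the preceding Lemma into the decay estimate \eqref{bjkbound} by repeated integration by parts. You have merely made explicit the details (evenness and reality of the $b_j$, vanishing of the boundary terms) that the paper leaves implicit, and you correctly locate the substantive content upstream in the bound $|\partial_\theta^k H_2|\le c_0\,\delta^k/T^k$.
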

\begin{proof}
We use the fact that $\Re ~\omega \left ( \boldsymbol{\zeta} (z) \right ) = H_2 (\theta)$ and
from the previous Lemma  
\begin{equation}
H_2 (\theta) = \sum_{j=0}^\infty b_j \cos (j \theta ) 
\end{equation}
where $b_j$ satisfies \eqref{bjkbound}. Therefore, the proposition immediately follows.
\end{proof}

\subsection{Proofs of Proposition \ref{prop3} and Theorem 1 \label{proofs}}

The proof of Proposition \ref{prop3}
follows from Propositions \ref{prop0} and \ref{prop6} if we choose $\delta=T^{1/2}$ and
note that from the definition of 
$\omega_1$ and $\omega_2$, on $|\zeta|=1$, $\Re~ W_{2,1} = \Re ~\omega_1 + \Re ~\omega_2$ and 
therefore for $W_{2,1} = \omega_1 + \omega_2$ since the ambiguity in the imaginary constant is
resolved by the condition $\Im W_{2,1} (0) =0$. 

The argument for $W_{2,2}$ 
similar except for replacing $\zeta$ by
$\rho/\zeta$ and $s/(z-d)$ by $s/(z+d)$ in the arguments given already in the decomposition
of $W_{2,1}$.

\noindent{\bf Proof of Theorem \ref{Thm1}} follows by using Proposition \ref{prop3}
and the prior observation on the series decay for 
$W_{2,1}$ when a series just in $\zeta$ (or $s/(z-d)$) is used 
({see \eqref{3.6}-\eqref{3.6.1}}). 
Similar statement hold for $W_{2,2}$ in powers either in $\rho/\zeta$ or (or $s/(z+d)$).


\section{Connections with other methods}

It is natural to enquire as to the connection of this method with the rather different 
scheme proposed by Cheng and Greengard \cite{CG1}.
For the two disc
problem,
in order  to attain accuracy of ${\mathcal O}(10^{-6})$,
Cheng and Greengard \cite{CG1}
use a representation of the potential fields
that involves a sum of multipole expansions about
14,000 reflection points -- or ``images'' -- inside the discs. 
Despite this large number of multipole contributions, the total
number of unknowns associated with each disc is kept small
because (analytically) known reflection operators are used to generate
successive generations of multipole expansions given only
the parent coefficients.
These reflection operators are, however, highly specific
to the particular problem being solved there and are not generally
applicable
to other problems. We discuss this again later.
In principle, successive reflections between the two discs
produces an infinite number of possible image singularities
about which, following Cheng \& Greengard \cite{CG1}, one
could introduce multipole expansions to improve accuracy.
These image points tend to definite limit points.
Explicit formulae for these
limit points are given in equation (17) of \cite{CG1}:
if $z_1$ and $z_2$ denote the complex positions of the
disc centres then the two limit points are
\begin{equation}
z_1(\infty) = {z_1 + z_2 \over 2} -
\sqrt{s \hat d+\hat d^2/4} \left ({z_2 - z_1 \over |z_2-z_1|} \right ),
\label{eq:eqLIM}
\end{equation}
where $s$ is the disc radius and $\hat d=|z_2-z_1|-2s$ is the
distance between the two discs.
Careful
inspection of
(\ref{eq:eqINV}) and (\ref{eq:eqM2}) 
reveals that the method we have introduced here is essentially
equivalent to the sum of just four multipole expansions -- two about
the centres of the discs and two more about the points $\pm A$.
Use of (\ref{eq:eqLIM}) then reveals that 
our hybrid method corresponds to using multipole expansions
about the two centres of the discs {\em and}
their limit points after infinitely many
reflections.
Figure \ref{Fig1} illustrates the connection between the two methods
schematically in the case of the two-disc problem.

The evidence here suggests that, from a numerical standpoint,
it is satisfactory to use 
multipole expansions
only about the disc centres {\em and} the limiting image points of these
centres rather than using increasingly many
multipole expansions about successive images.

\begin{figure}
\begin{center}
\includegraphics[scale=0.4]{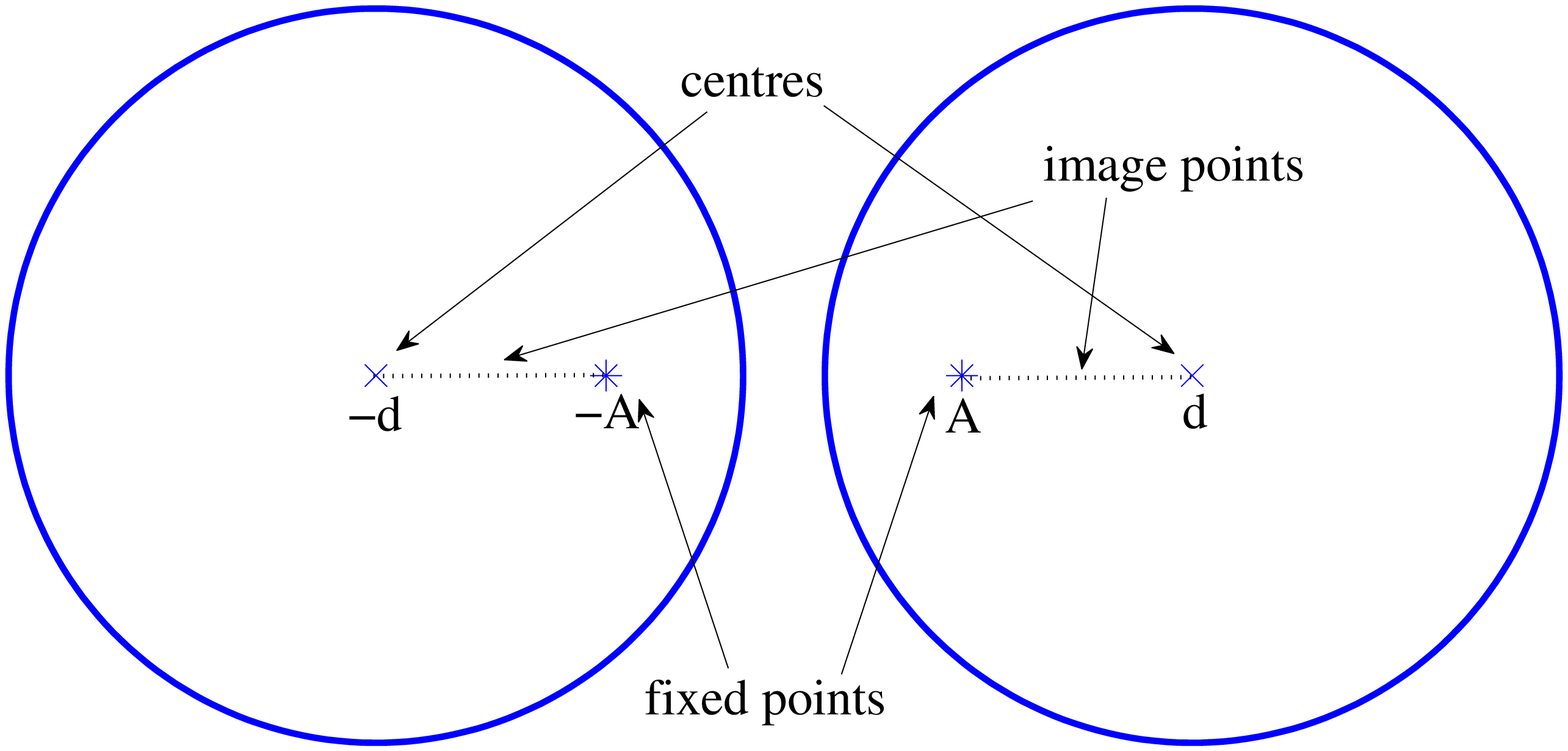}
\includegraphics[scale=0.4]{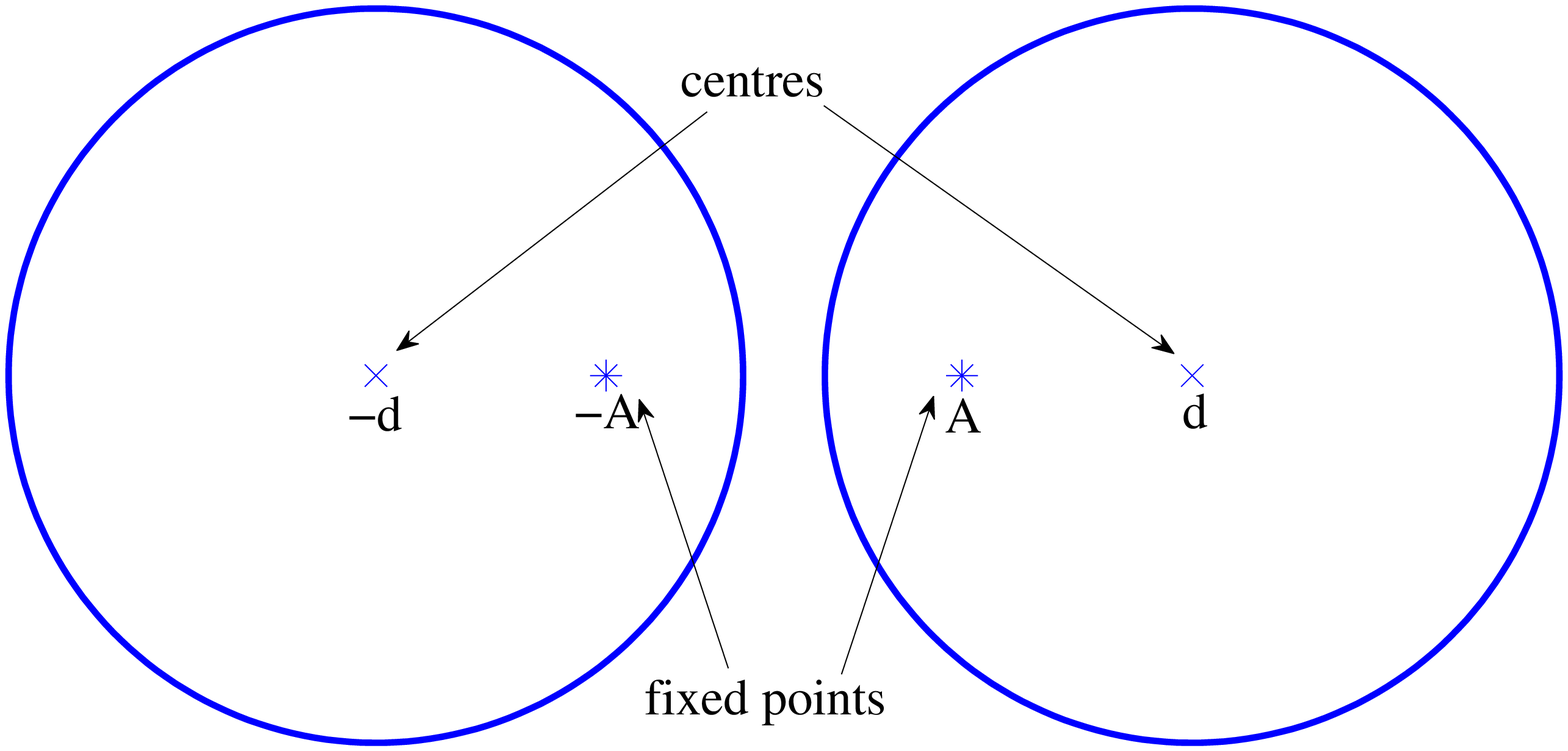}
\caption{Schematic illustrating that 
the method of Cheng \& Greengard \cite{CG1} (upper figure)
incorporates multipole expansions
about successive reflections (``images'') of the circle centres in the circle boundaries.
The new method (lower figure)
effectively
includes only multipole expansions about the circle
centres and the limit points of the  reflections. \label{Fig1}}
\end{center}
\end{figure}

\section{Extension to any number of discs}

Our essential idea is readily extendible to the multi-disc case.
 Suppose that discs $D_i$ and $D_j$
are separated by less than
some threshold value $\delta$. 
Based on the two-disc problem just analyzed, and depending on
the required accuracy and speed, an
appropriate $\delta$ might be of the 
order ${\mathcal O}(10^{-2})$, for example.
The
conformal mapping from a concentric
annulus $\rho_{ij} < |\zeta| < 1$ (for some 
real parameter $0 < \rho_{ij} < 1$) to the exterior of these two discs
is then found analytically. 
This is a M\"obius
transformation, as is its inverse mapping, which we will denote by
$\zeta_{ij}(z)$.
Then, in addition to Fourier-Laurent expansions about the disc centres,
terms of the form
\begin{equation}
 \sum_{k=1}^{\mathcal N} c_k^{(ij)}
  [\zeta_{ij}(z)]^k
  + \sum_{k=1}^{\mathcal N} {d_k ^{(ij)}\rho_{ij}^k \over [\zeta_{ij}(z)]^{k}}
 \label{eq:eqM2a}
\end{equation}
are also included (additively) in the representation of the complex potential function $w(z)$.
Equations for the
additional 
coefficients $\lbrace c_k^{(ij)}, d_k^{(ij)} | k=1,..., {\mathcal N} \rbrace$
are obtained, as before, by substituting
the expression for $w(z)$ into the boundary conditions
and evaluating at 
equi-spaced collocation points on the two circles
$|\zeta|=1, \rho_{ij}$.

As a benchmark test of our method in the multi-disc case, we revisit an example considered
by Cheng \& Greengard \cite{CG1}
involving a square array of nine equal discs.
Actually, the latter authors solve a two-phase problem
including a computation of the electrostatic
fields inside the discs as well as outside but
the conductivity ratio involved is $10^{8}$ 
which is so large that we
expect to be able to
retrieve their results using our method (the boundary value 
problem we solve corresponds
to the case of infinite conductivity ratio).
We have found that the hybrid basis scheme 
compares favourably with that of
\cite{CG1}.
It is capable of retrieving (with good
accuracy and efficiency) the same results 
in the most singular cases they analyze, including disc separations
as small as $10^{-7}$.

\begin{center}
\begin{tabular}{|p{1.in}|p{2in}|p{.8in}|} \hline
separation &  magnitude of dipole moment & modes ${\mathcal N}$ \\ \hline
$10^{-2}$  & 0.39194 & 5
\\ \hline
$10^{-3}$  & 0.43722 & 8
\\ \hline
$10^{-4}$  & 0.44964 & 15
\\ \hline
$10^{-5}$  & 0.45337 & 20
\\ \hline
$10^{-6}$  & 0.45453 & 25
\\ \hline
$10^{-7}$ & 0.45490 & 35
\\ \hline
\end{tabular}
\vskip 0.1truein
Table 6: Performance of hybrid basis scheme: the number of
modes required to retrieve the dipole moments, correct to ${\mathcal O}(10^{-4})$, found in
\cite{CG1} are recorded.
\end{center}

In the nine-disc problem,
the representation of the solution within the hybrid basis
scheme takes the form
\begin{equation}
w(z) = U_0 z + C +
\sum_{j=1}^9 \sum_{k=1}^{\mathcal N} {a_k^{(j)} s^k \over (z-c_j)^k}
+ \sum_{j=1}^{12} \sum_{k=1}^{\mathcal N}
b_k^{(j)} [\zeta_j(z)]^k + c_k^{(j)} [\zeta_j(z)]^{-k}
\label{eq:eqM5}
\end{equation}
where the coefficients
$\lbrace a_k^{(j)}, b_k^{(j)}, c_k^{(j)} \rbrace$ are to be found.
The set of disc centres is denoted $\lbrace c_j | j=1,...,9 \rbrace$.
We centred  
the central disc
at the origin, with four discs centred at $\pm d$ and $\pm {\rm i}d$ 
and four at $\pm d \pm {\rm i}d$.
Figure \ref{Fig9} shows a schematic.
The radius of each disc is $s=d-\epsilon/2$ where $\epsilon$
gives a measure of the disc separation.
To retrieve the results of Cheng \& Greengard we have
taken $d=0.2$.
The first double sum on the right hand side is the sum of the
multipole expansions about the 9 disc centres; the second sum
represents the additional terms arising from the 12 pairs of close-to-touching
discs. Only discs separated by exactly $\epsilon$ are included and, by inspection
of the geometry, it is easy to see that there
are 12 of these. In Figure \ref{Fig9} we have indicated
these interactions with double arrows.

\begin{center}
\begin{figure}
\begin{center}
\includegraphics[scale=0.6]{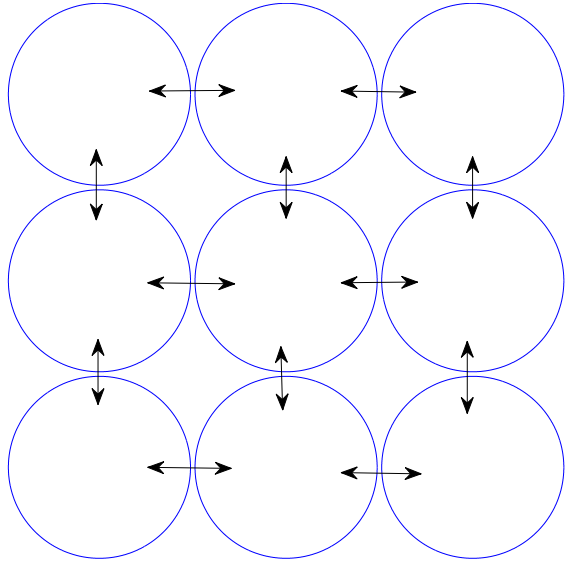}
\end{center}
\caption{The 9-disc example. The 12 close-to-touching disc interactions
are indicated by double arrows. For each such interaction, an additional
series term in powers of $\zeta_j$  is included in the representation (\ref{eq:eqM5}) for
$w(z)$. \label{Fig9}}
\end{figure}
\end{center}

Table 6 shows our results as a function of
disc separation $\epsilon$.  
We truncated the infinite
sums at ${\mathcal N}$ modes and took $2 {\mathcal N}+10$ collocation
points on each circle in order to yield an over-determined linear
system. 
Note that it is a simple matter to establish an expression for the
required dipole moment
from the representation (\ref{eq:eqM5}).


As a comparison of performance, 
Figure \ref{Fig4} shows
a graph of the logarithm of the disc separation against the number of modes
needed to attain the correct dipole moment correct to ${\mathcal O}(10^{-4})$.
Results are shown both for the new method given here as well as that of
Cheng \& Greengard \cite{CG1}. The key feature is that both graphs are close to
{\em linear} (albeit with different slopes). 
This means that both methods require just
a linear increase in the degree of truncation
as the order of magnitude of the separation is decreased. 
This is a major improvement on previous methods (e.g., the original
Fourier-Laurent method) where the required number of
modes appears to increase by an order of magnitude
as the separation decreases by
an order of magnitude and, therefore, rapidly become unviable for very small
separations.

\begin{center}
\begin{figure}
\includegraphics[scale=0.5]{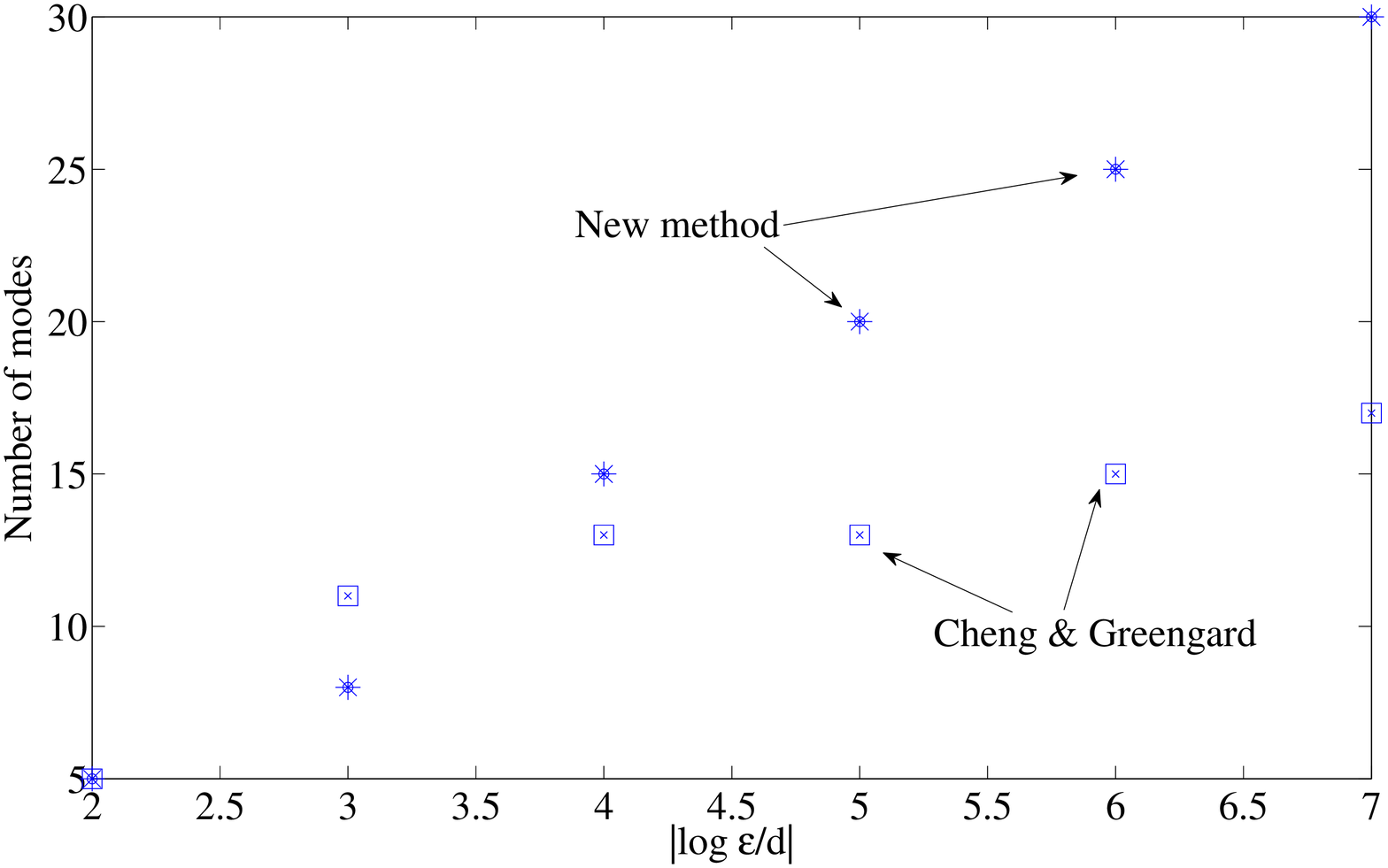}
\caption{Graph showing the number of modes needed (for accuracy
${\mathcal O}(10^{-4})$ in the 9-disc example) 
against the logarithm of the disc separation
$\log_{10}(\epsilon)$. Both methods require only a linear increase
in the number of modes as the separation decreases by an order
of magnitude. \label{Fig4}}
\end{figure}
\end{center}

\section{Discussion}

A new ``hybrid basis scheme'' has been presented to compute analytic
functions exterior to a collection of close-to-touching discs and satisfying
certain boundary conditions of the boundaries of those discs.
The method is conceptually simple and it is easy to implement.
It
affords similar advantages to the method
of Cheng \& Greengard \cite{CG1}
in that it
requires just a linear increase in the truncation as the
disc separation decreases by an order of magnitude.
The new method, however,
has the advantage of not requiring analytical knowledge of any
``reflection operators'' to produce the coefficients of multipole expansions
about 
higher generations of reflections.
This will be important
for problems involving the computation
of functions analytic outside a collection of close-to-touching discs
when the boundary value problems 
determining these functions
are such that
analogous reflection operators
are not known analytically.
It should also be clear that close-to-touching discs having unequal radii
are also amenable to the same method.


One motivation for seeking a simple and effective numerical
scheme for this problem
is our desire to
optimize a numerical scheme presented in \cite{Cro22}
for the computation of a special transcendental function known as the Schottky-Klein
prime function \cite{Baker}. 
This function is very useful \cite{Cro1, Cro2, Cro3} for finding analytical solutions
to problems involving multiply connected domains (indeed the function
$P(\zeta,\rho)$ used to find the exact solution (\ref{eq:eqEX}) is closely related
to a special
case of a Schottky-Klein prime function).
This function is defined in a multiply connected circular
region -- one whose boundaries consist purely of circles -- and, in certain applications, those circles draw close
together.
The hybrid basis scheme 
is expected to provide a viable means of accurately computing 
the Schottky-Klein prime function 
in such cases. Work on this is currently in progress.

\vskip 0.3truein \noindent {\bf Acknowledgments:} 
DGC acknowledges support from
an EPSRC Established Career Fellowship.
ST acknowledges support from an EPSRC Platform Grant held at Imperial College
London and support from the NSF DMS-110894.
DGC acknowledges many useful discussions with J. S. Marshall.

\section{Appendix: Alternative Representation of $K (\zeta)$}

Unfortunately (\ref{1.10}) is not in a suitable form 
to study its asymptotics as $\rho \rightarrow 1^{-}$. 
To find an alternative form it is useful to choose 
\begin{equation}
\label{A2.1}
\xi = \frac{1}{\pi} \log \zeta.
\end{equation}
Then it is clear from the representation  (\ref{1.10}) 
that
\begin{equation}
\label{A2.2}
Q(\xi) = 
\frac{1}{\pi} 
\frac{d}{d\xi} K \left (e^{\pi \xi} \right )  
= \frac{\zeta (\xi)}{\pi} K^\prime (\zeta (\xi)) 
\end{equation}
is a doubly periodic function with periods
periods $2 {\rm i}$ and $2 T$, where $T=(1/\pi) \log \rho^{-1} $, with double pole
at $\xi=0$ and all points congruent to it, i.e., at the set of points
$2 {\rm i} m + 2 n T$ for $(m,n) \in
\mathbb{Z}^2$. Further, from (\ref{1.10}) it follows that 
\begin{equation}
\label{A2.3}
\lim_{\xi \rightarrow 0} \xi^2 Q(\xi) = \frac{1}{\pi^2} 
\lim_{\zeta \rightarrow 1} \left ( \log \zeta \right )^2 \zeta K^\prime (\zeta)  
= -\frac{1}{\pi^2}.
\end{equation}
On applying Liouville's theorem we deduce
\begin{equation}
\label{A2.4}
Q(\xi) = C_0-\frac{1}{\pi^2} \wp \left (\xi; T, {\rm i} \right )  \ ,
\end{equation}
for some constant $C_0$ that will be determined shortly, and where
we have introduced the Weierstrass $\wp$ function represented by
\begin{equation}
\label{A2.5}
\wp (\xi; \omega_1, \omega_2 ) = 
\frac{1}{\xi^2} + \sum_{(m,n) \in \mathbb{Z}^2 \setminus \{(0,0)\}}
\left [\frac{1}{(\xi-2 n \omega_1 - 2m \omega_2 )^2} - \frac{1}{(2 n \omega_1 + 2 m \omega_2)^2} 
\right ].
\end{equation}
This representation is absolutely convergent for any $\xi$ different from $0$ and points congruent to it,
and so the order of summation in $m$ and $n$ is irrelevant.
To sum first in $m$, it is convenient to introduce  
\begin{equation}
\label{A2.6}
\xi_n = \xi-2 n T ~~\ , ~~\xi_{0,n} = -2 n T.
\end{equation}
It follows that
\begin{multline}
\label{A2.7}
\wp (\xi; T, {\rm i}) = 
\sum_{n \in \mathbb{Z}} \sum_{m=1}^{\infty} 
\left [ \frac{1}{(\xi_n - 2 {\rm i} m)^2}
- \frac{1}{(\xi_{0,n} - 2 {\rm i} m )^2} 
+\frac{1}{(\xi_n + 2 {\rm i} m)^2}
- \frac{1}{(\xi_{0,n} + 2 {\rm i} m )^2} 
\right ]  \\
+\frac{1}{\xi^2} + 
\sum_{n \ne 0} \left ( \frac{1}{\xi_n^2} - \frac{1}{4 n^2 T^2} \right )  
\end{multline}
On use of the meromorphic representation of $1/{\sinh^2({\pi \xi/2})}$, it follows that
\begin{equation}
\label{A2.8}
\wp (\xi; T, {\rm i}) 
= \frac{\pi^2}{4 \sinh^2 \frac{\pi \xi}{2} } + \frac{\pi^2}{12} 
+\sum_{n \ne 0} \left ( \frac{\pi^2}{4 \sinh^2 \frac{\pi \xi_n }{2}} - 
\frac{\pi^2}{4 \sinh^2 (\pi n T)} \right ) \ ,   
\end{equation}
which implies  
\begin{equation}
\label{A2.9}
Q (\xi) = -\frac{1}{4 \sinh^2 \frac{\pi \xi}{2} } - \frac{1}{12} 
-\sum_{n \ne 0} \left ( \frac{1}{4 \sinh^2 \frac{\pi \xi_n }{2}} - 
\frac{1}{4 \sinh^2 (\pi n T)} \right ) + C_0 \ ,   
\end{equation}
where $C_0$ will shortly be determined.
From (\ref{1.10}) and (\ref{A2.2}), it follows that
$K (e^{\pi \xi})$ from $Q(\xi)$ may be related through the following integration: 
\begin{equation}
\begin{split}
\label{A2.10.1}
K \left (e^{\pi \xi} \right ) = 
\frac{1}{2} \coth \frac{\pi \xi}{2} 
&+ \sum_{n \ne 0} \left \{ 
\frac{1}{2} \coth \left ( \frac{\pi}{2} (\xi-2 nT) \right ) 
+ \frac{\pi \xi}{4 \sinh^2 (n \pi T)} \right \}
\\ &+ \left ( C_0-\frac{1}{12} \right ) \pi \xi + c.
\end{split}
\end{equation}
Using 
\begin{equation}
\coth z = \frac{e^{2z}+1}{e^{2z}-1} = \frac{2 e^{2z}}{e^{2z} - 1} - 1 
\end{equation} 
it follows that an alternative representation is
\begin{multline}
\label{A2.11}
K\left (e^{\pi \xi} \right ) = 
\frac{e^{\pi \xi}}{e^{\pi \xi} -1 } 
+ \sum_{n=1}^\infty \left \{ 
\frac{e^{\pi \xi} e^{-2 n T\pi}}{ 
e^{\pi \xi} e^{-2 n T\pi}-1}
+\frac{e^{-\pi \xi} e^{-2 n T\pi}}{ 
1-e^{-\pi \xi} e^{-2 n T\pi}} \right \} \\
+\left (c-\frac{1}{2} \right ) +  
\left ( \sum_{n \ne 0} \frac{1}{4 \sinh^2 (n \pi T)} + C_0 - \frac{1}{12} \right ) \pi \xi
\end{multline}
This reproduces the representation (\ref{1.10}) provided, 
\begin{equation}
\label{A2.12}
c=\frac{1}{2} ~~\ ,~~ C_0 = \frac{1}{12} 
- \sum_{n\ne 0} \frac{1}{4 \sinh^2 (n \pi T)} 
\end{equation}
Thus, with the determination of $C_0$, the connection between $Q(\xi)$ 
and 
the Weierstrass $\wp$ function has been made, and with the integration constant
$c$ as determined, it follows that
\begin{equation}
\label{A2.10}
\frac{1}{\pi} K \left (e^{\pi \xi} \right ) = 
+ \frac{1}{\pi^2 \xi} + \frac{1}{2} +
\int_0^{\xi} \left ( Q (\xi') + \frac{1}{\pi^2 {\xi'}^2} \right ) d\xi \ ,
\end{equation}
and hence $K$ can be determined in terms of the Weierstrass $\wp$ function 
through an integration.
To find the uniform asymptotics for small $T$, it is better to sum over $n$ first in the
meromorphic representation (\ref{A2.7}) for $\wp$. 
Then, using the meromorphic representation of ${\rm cosec}^2$, it follows that
\begin{equation}
\label{A2.14}
\wp (\xi; T, {\rm i}) 
= \frac{\pi^2}{4T^2 \sin^{2} \frac{\pi \xi}{2 T}}  - \frac{\pi^2}{12 T^2} 
+ \sum_{m \ne 0} \left \{  
\frac{\pi^2}{4T^2 \sin^{2} \frac{\pi (\xi-2{\rm i} m)}{2 T}} 
+ \frac{\pi^2}{4T^2 \sinh^2 \frac{\pi m}{T} } \right \}
\end{equation}
On using the relation between $K$, $Q$ and $\wp$, we find
\begin{multline}
\label{2.15}
K \left ( e^{\pi \xi} \right ) 
= \frac{1}{2T} \cot \frac{\pi \xi}{2T}  
+ \frac{1}{2T} 
\sum_{m \ne 0} \left \{ 
\cot \left ( \frac{\pi (\xi-2{\rm i}m) }{2T} \right )  - \cot \left ( \frac{-{\rm i} m \pi}{T} \right )
\right \} \\
+ \frac{1}{2} + \left ( \frac{1}{2 \pi T} + C_1 \right ) \pi \xi  \ ,
\end{multline}
where
\begin{equation}
\label{A2.16}
C_1 = \frac{1}{12} - \frac{1}{2} \sum_{n=1}^\infty
\frac{1}{\sinh^2 (n \pi T)} + \frac{1}{12 T^2} 
- \frac{1}{2 T^2} 
\sum_{m=1}^{\infty} \frac{1}{\sinh^2 \frac{m \pi}{T} } 
- \frac{1}{2 \pi T}
\end{equation}
$C_1=0$ as it must in order that $K (e^{\pi (\xi+2 T)} ) = 1 + K (e^{\pi \xi} )$.
This follows indirectly since \eqref{1.10} implies that 
property and representation \eqref{2.15} has been derived in a series of steps from (\ref{1.10}).

It is interesting to note that
independently we may arrive at the same conclusion by
$C_1$ as meromorphic
function of $T$ where the residues vanish at all possible poles and the asymptotics as $T \rightarrow 0$
gives zero, implying from Liouville's theorem that $C_1=0$. 
The single valuedness of $K$ as a function of $\zeta$ is obvious in (\ref{1.10}).
To directly check this property, {\it i.e.} that 
$K \left ( e^{\pi (\xi+2{\rm i})} \right ) = K \left ( e^{\pi \xi} \right )$, is
not so obvious in (\ref{2.15}) since
there is a linear term $\xi = (1/\pi) \log \zeta$. Nonetheless, this is true,
as we now argue. It is useful to introduce notation
\begin{equation}
\label{A2.17}
\chi = e^{i \pi \xi/T}  ~~\ , ~~~ \mu = e^{-2 \pi/T} 
\end{equation} 
Then using the exponential representation of $\cot$ function, \eqref{2.15} implies that
\begin{equation}
\label{A2.18}
K \left ( e^{\pi \xi} \right ) = \frac{\xi}{2T} + \left ( \frac{1}{2} - \frac{{\rm i}}{2 T} \right ) 
+ \frac{{\rm i}}{T} \left ( \frac{\chi}{\chi-1} \right )   
+\frac{{\rm i}}{T} \sum_{m=1}^\infty \left ( 
\frac{\mu^m \chi^{-1} }{1-\mu^m \chi^{-1} } 
-\frac{\mu^m \chi }{1-\mu^m \chi}  
\right ).
\end{equation}
We notice that the transformation $\xi \mapsto \xi+2 {\rm i}$ implies that 
$\chi \mapsto \mu \chi$ and it is then readily checked by shifting the summation index that
$K \left ( e^{\pi (\xi+2 {\rm i})} \right ) = K \left (e^{\pi \xi} \right )$, as it must to be
consistent with the single valuedness of $K(\zeta)$ in the representation (\ref{1.10}).

\end{document}